\numberwithin{equation}{section}
\theoremstyle{plain}
\newtheorem{theorem}{Theorem}[section]
\newtheorem*{question}{Question}
\newtheorem{lemma}[theorem]{Lemma}
\newtheorem{proposition}[theorem]{Proposition}
\theoremstyle{definition}
\newtheorem{definition}[theorem]{Definition}
\newtheorem{remark}[theorem]{Remark}
\newtheorem{example}[theorem]{Example}
\newcommand{\Db}{\mathrm{D^b}}
\DeclareMathOperator{\Proj}{Proj}
\DeclareMathOperator{\Sym}{Sym}
\DeclareMathOperator{\Spec}{Spec}
\DeclareMathOperator{\Hom}{Hom}
\DeclareMathOperator{\Tor}{Tor}
\newcommand{\image}{\mathrm{Im}}
\newcommand{\pr}{\mathrm{pr}}
\newcommand{\id}{\mathrm{id}}
\DeclareMathOperator{\Cone}{Cone}
\DeclareMathOperator{\Pic}{Pic}
\newcommand{\Cliff}{\mathscr{C}\!\ell}
\newcommand{\inv}{\mathrm{inv}}
\newcommand{\Br}{\mathrm{Br}}
\newcommand{\cO}{\mathcal{O}}
\newcommand{\cA}{\mathcal{A}}
\newcommand{\cC}{\mathcal{C}}
\newcommand{\cE}{\mathcal{E}}
\newcommand{\cF}{\mathcal{F}}
\newcommand{\cG}{\mathcal{G}}
\newcommand{\cL}{\mathcal{L}}
\newcommand{\cT}{\mathcal{T}}
\newcommand{\cX}{\mathcal{X}}
\newcommand{\rL}{\mathrm{L}}
\newcommand{\rR}{\mathrm{R}}
\newcommand{\rT}{\mathrm{T}}
\newcommand{\bP}{\mathbf{P}}
\newcommand{\bA}{\mathbf{A}}
\newcommand{\bF}{\mathbf{F}}
\newcommand{\bR}{\mathbf{R}}
\newcommand{\bZ}{\mathbf{Z}}
\newcommand{\bC}{\mathbf{C}}
\newcommand{\bQ}{\mathbf{Q}}
\newcommand{\bG}{\mathbf{G}}
\newcommand{\pA}{\CMcal{A}}
\newcommand{\pF}{\CMcal{F}}
\begin{document}

\title[Rational points on twisted K3 surfaces and derived equivalences]{Rational points on twisted K3 surfaces \\[2pt] and derived equivalences}
\author{Kenneth Ascher}
\email{kenascher@math.brown.edu}
\address{Mathematics Department, Brown University}
\author{Krishna Dasaratha}
\email{kdasarat@math.stanford.edu}
\address{Department of Mathematics, Stanford University}
\author{Alexander Perry}
\email{aperry@math.harvard.edu}
\address{Department of Mathematics, Harvard University}
\author{Rong Zhou}
\email{rzhou@math.harvard.edu}
\address{Department of Mathematics, Harvard University}

\thanks{This project was supported by the 2015 AWS and NSF grant DMS-1161523.  K.A. was also partially supported by NSF grant DMS-1162367.
A.P. was also partially supported by NSF GRFP grant DGE1144152.}

\begin{abstract} 
Using a construction of Hassett and V\'arilly-Alvarado, we produce derived 
equivalent twisted K3 surfaces over $\bQ$, $\bQ_2$, and $\bR$, where one has a rational point 
and the other does not. 
This answers negatively a question recently raised by Hassett and Tschinkel. 
\end{abstract}

\maketitle

\section{Introduction}
A twisted K3 surface is a pair $(X, \alpha)$, where $X$ is a K3 surface and $\alpha \in \Br(X)$ is a Brauer class.
In a recent survey paper~\cite{ht}, Hassett and Tschinkel asked whether the existence of a 
rational point on a twisted K3 surface is invariant under derived equivalence. 
More precisely, they asked: 

\begin{question}\label{conj}
Let $(X_1, \alpha_1)$ and $(X_2, \alpha_2)$ be twisted K3 surfaces over a field $k$. 
Suppose there is a $k$-linear equivalence 
\begin{equation*}
\Db(X_1, \alpha_1) \simeq \Db(X_2, \alpha_2) 
\end{equation*} 
of twisted derived categories. 
Then is the existence of a $k$-point of $(X_1, \alpha_1)$ equivalent to the existence 
of a $k$-point of $(X_2, \alpha_2)$?
\end{question}

By definition, a $k$-point of a twisted K3 surface $(X, \alpha)$ is a point $x \in X(k)$ such that 
the evaluation $\alpha(x) = 0 \in \Br(k)$. 
Equivalently, it is a $k$-point of the $\bG_m$-gerbe over $X$ associated to $\alpha$. 

In~\cite{ht}, it is shown that for the untwisted case of the question where $\alpha_1, \alpha_2$ vanish, 
the answer is positive over certain fields $k$, 
e.g. $\bR$, finite fields, and $p$-adic fields 
(provided the $X_i$ have good reduction, or $p \geq 7$ and the $X_i$ have ADE reduction).
The purpose of this paper is to show that if $\alpha_1, \alpha_2$ are allowed to be nontrivial, 
the answer to the question is negative for $k = \bQ, \bQ_2$, and $\bR$. 

We work over a field $k$ of characteristic not equal to $2$, 
and consider a double cover $Y \to \bP^2 \times \bP^2$ ramified over a divisor of bidegree $(2,2)$.
The projection $\pi_i: Y \to \bP^2$ onto the $i$-th $\bP^2$ factor, $i=1,2$, realizes $Y$ as a quadric 
fibration. Provided that the discriminant divisor of $\pi_i$ is smooth, the Stein factorization of the relative Fano 
variety of lines of $\pi_i$ is a K3 surface $X_i$, which comes with a natural Brauer class $\alpha_i$. 
In this setup, we prove the following result. 

\begin{theorem}
\label{theorem-equivalence-intro}
There is a $k$-linear equivalence $\Db(X_1, \alpha_1) \simeq \Db(X_2, \alpha_2)$.
\end{theorem}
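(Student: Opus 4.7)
The plan is to apply Kuznetsov's semiorthogonal decomposition for quadric fibrations to each projection $\pi_i \colon Y \to \bP^2$, identify the resulting Kuznetsov component with the twisted derived category $\Db(X_i, \alpha_i)$, and finally compare the two components inside $\Db(Y)$.

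Let $h_1, h_2$ denote the pullbacks of the hyperplane classes from the two $\bP^2$ factors. Each projection $\pi_i$ realizes $Y$ as a quadric surface fibration whose general fiber is a smooth $\bP^1 \times \bP^1$ (the double cover of $\bP^2$ branched over a conic), and whose discriminant is the degree-$6$ plane curve cut out by the determinant of the associated $3 \times 3$ symmetric matrix of quadratic forms; hence $X_i$ is a smooth K3 surface. Kuznetsov's theorem then provides a $k$-linear semiorthogonal decomposition
$$\Db(Y) = \langle \Db(\bP^2, \Cliff_{0,i}),\ \pi_i^*\Db(\bP^2),\ \pi_i^*\Db(\bP^2) \otimes \cO_Y(h_{3-i}) \rangle,$$
where $\Cliff_{0,i}$ is the sheaf of even parts of the Clifford algebra of $\pi_i$ and $\cO_Y(h_{3-i})$ is the relative $\cO(1)$ (which on a smooth fiber is the Segre class $\cO(1,1)$ on $\bP^1 \times \bP^1$). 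Smoothness of the discriminant sextic implies that the pushforward of $\Cliff_{0,i}$ along $X_i \to \bP^2$ is an Azumaya algebra of Brauer class $\alpha_i$, yielding $\Db(\bP^2, \Cliff_{0,i}) \simeq \Db(X_i, \alpha_i)$.

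Combining with the Beilinson decomposition $\Db(\bP^2) = \langle \cO, \cO(1), \cO(2) \rangle$, the two decompositions expand to
\begin{align*}
\Db(Y) &= \langle \cA_1,\ \cO_Y,\ \cO_Y(h_1),\ \cO_Y(2h_1),\ \cO_Y(h_2),\ \cO_Y(h_1+h_2),\ \cO_Y(2h_1+h_2) \rangle, \\
\Db(Y) &= \langle \cA_2,\ \cO_Y,\ \cO_Y(h_2),\ \cO_Y(2h_2),\ \cO_Y(h_1),\ \cO_Y(h_1+h_2),\ \cO_Y(h_1+2h_2) \rangle,
\end{align*}
with $\cA_i \simeq \Db(X_i, \alpha_i)$. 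Four of the six line bundles in the exceptional pieces coincide, so it remains to produce a $k$-linear equivalence $\cA_1 \simeq \cA_2$ --- ideally by showing $\cA_1 = \cA_2$ as subcategories of $\Db(Y)$ via a chain of mutations trading $\cO_Y(2h_1), \cO_Y(2h_1+h_2)$ for $\cO_Y(2h_2), \cO_Y(h_1+2h_2)$ modulo the common exceptional objects.

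The main obstacle is this final mutation step: each individual mutation is governed by a standard distinguished triangle, but chaining them while simultaneously tracking the displacement of the Kuznetsov component is delicate. Since the theorem is used downstream to compare $k$-rational points of $(X_1, \alpha_1)$ and $(X_2, \alpha_2)$, one must additionally verify that the entire argument is visibly $k$-linear and does not implicitly invoke a base change to $\Ok$.
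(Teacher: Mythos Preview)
Your approach is exactly the paper's: Kuznetsov's quadric-fibration decomposition on each projection, the identification $\Db(\bP^2,\Cliff_{0,i})\simeq\Db(X_i,\alpha_i)$ via the even Clifford algebra on the discriminant double cover, and then a mutation argument inside $\Db(Y)$ to match the two Kuznetsov components.

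Two remarks on execution. First, the ``obstacle'' you flag is dispatched in the paper by a single Serre-functor mutation rather than by trading two pairs of objects. Using the form of Kuznetsov's theorem with twists $\cO_Y(H_{3-i})$ and $\cO_Y(2H_{3-i})$, and expanding the second Beilinson block with a shift by $\cO(H_i)$, the six exceptional objects on each side differ only in the rightmost one. Since $K_Y=-2H_1-2H_2$, mutating that object to the far left is just tensoring by $\omega_Y$ (Lemma on Serre functors), after which the six objects on both sides agree as a set; one further left mutation of $\cC_i$ through a single line bundle then makes the Kuznetsov components literally equal as right orthogonals. This avoids tracking a chain of mutations through the component. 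Second, a small slip: $\Cliff_{0,i}$ already lives on $\bP^2$; it is $f_{i*}$ of the Azumaya algebra $\pA_i$ on $X_i$ (not a pushforward from $\bP^2$ to $X_i$), and that is what gives $\Db(\bP^2,\Cliff_{0,i})\simeq\Db(X_i,\alpha_i)$. The $k$-linearity concern is harmless: Kuznetsov's functor, Beilinson's decomposition, and mutation functors are all defined over $k$.
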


We note that this result seems to be known to the experts 
(at least for $k = \bC$), but we could not find a proof in the literature. 

Hassett and V\'arilly-Alvarado studied the above construction of twisted K3s in relation to rational points \cite{hva}.
They show that over $k = \bQ$, 
if certain conditions are imposed on the branch divisor $Z \subset \bP^2 \times \bP^2$ of $Y$, 
the class $\alpha_1$ gives a (transcendental) Brauer--Manin obstruction to the Hasse principle on $X_1$. 
A priori, $\alpha_2$ need not obstruct the existence of rational points on $X_2$. 
In fact, it is possible that $X_2$ has rational points, 
but the conditions imposed on $Z$ result in very large coefficients of the defining equation of $X_2$, 
making a computer search for points infeasible.

In this paper, we observe that the $2$-adic condition imposed by Hassett and V\'arilly-Alvarado can be relaxed, 
while still guaranteeing $\alpha_1$ gives a Brauer--Manin obstruction (see Lemma \ref{2-adic}). The upshot is that the defining coefficients 
of $X_2$ are much smaller, making it easy to find rational points with a computer. Up to modifying the $\alpha_i$ by 
a Brauer class pulled back from $k = \bQ$, we obtain the desired example over $\bQ$. 
We also check the example ``localizes'' over $\bQ_2$ and $\bR$. More precisely, we prove: 

\begin{theorem}
\label{theorem-counterexample-intro}
For $k = \bQ, \bQ_2$, or $\bR$, the divisor $Z\subset \bP^2 \times \bP^2$ can be chosen so that there are Brauer classes 
$\alpha'_i \in \Br(X_i)$, congruent to $\alpha_i$ modulo $\image(\Br(k) \to \Br(X_i))$, such that:
\begin{enumerate}
\item There is a $k$-linear equivalence $\Db(X_1, \alpha'_1) \simeq \Db(X_2, \alpha'_2)$,
\item $(X_1, \alpha'_1)$ has no $k$-point,
\item $(X_2, \alpha'_2)$ has a $k$-point.
\end{enumerate}
\end{theorem}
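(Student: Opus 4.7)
The plan is to combine Theorem~\ref{theorem-equivalence-intro} with the Hassett--V\'arilly-Alvarado construction in its relaxed form (Lemma~\ref{2-adic}), and to adjust the Brauer classes by a single element pulled back from $\Br(k)$ so that part (3) becomes a trivial evaluation at a computer-found rational point, while part (2) persists by reciprocity. Concretely, I will take $\alpha'_i = \alpha_i + \beta_{X_i}$ for a single class $\beta \in \Br(k)$, where $\beta_{X_i}$ denotes the pullback to $X_i$. Part (1) is then immediate from Theorem~\ref{theorem-equivalence-intro}: since the equivalence there is $k$-linear, conjugating by the tensoring functors $T_\beta \colon \Db(X_i,\alpha_i) \to \Db(X_i,\alpha'_i)$ attached to the pullback of the $\bG_m$-gerbe on $\Spec k$ upgrades it to an equivalence between the categories twisted by $\alpha'_1$ and $\alpha'_2$.

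Over $k = \bQ$, I use Lemma~\ref{2-adic} to choose a $(2,2)$-divisor $Z \subset \bP^2\times\bP^2$ satisfying the relaxed Hassett--V\'arilly-Alvarado conditions, so that $\alpha_1$ yields a Brauer--Manin obstruction on $X_1$: $X_1(\bA_\bQ) \neq \emptyset$ but $\sum_v \inv_v(\alpha_1(x_v)) \neq 0$ for every adelic point. Because the 2-adic hypothesis is relaxed, the equations cutting out $X_2$ have small enough coefficients that a direct computer search produces a point $x_2 \in X_2(\bQ)$. Setting $\beta := -\alpha_2(x_2) \in \Br(\bQ)$ gives $\alpha'_2(x_2) = 0$, which is part (3). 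For part (2), the product formula yields $\sum_v \inv_v(\beta_{X_1}(x_v)) = \sum_v \inv_v(\beta) = 0$, hence
\[
\sum_v \inv_v(\alpha'_1(x_v)) \;=\; \sum_v \inv_v(\alpha_1(x_v)) \;\neq\; 0
\]
for every adelic point $(x_v) \in X_1(\bA_\bQ)$, and $(X_1,\alpha'_1)$ has no $\bQ$-point.

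For $k = \bQ_2$ and $\bR$, global reciprocity is replaced by direct local verifications. Starting from a chosen equation for $Z$ (possibly the base-change of the $\bQ$-example, possibly a dedicated one), and using the relative even Clifford algebra description of the $\alpha_i$, I evaluate $\alpha_1$ at every $k$-point of $X_1$ and select $\beta \in \Br(k)$ so that $\alpha'_1$ has no $k$-zero while $\alpha'_2$ vanishes at some explicit $k$-point of $X_2$. Over $\bR$ this reduces to a parity check in $\Br(\bR) \cong \bZ/2\bZ$, and over $\bQ_2$ to a finite Hilbert-symbol computation.

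The main obstacle is the interplay between Lemma~\ref{2-adic} and the computer search: the relaxed 2-adic hypothesis must be weak enough that the coefficients of $X_2$ lie within reach of a brute-force enumeration of rational points, yet strong enough that the Brauer--Manin obstruction from $\alpha_1$ on $X_1$ survives. Complementing this, the local calculations of $\alpha_i$ via the even Clifford algebra must be made concrete enough to decide the vanishing or non-vanishing of $\alpha'_i$ at each candidate local point; this is where the bulk of the explicit arithmetic work will take place, particularly the $\bQ_2$-case, where one must check that no choice of $\bQ_2$-point of $X_1$ annihilates the local invariant of $\alpha'_1$.
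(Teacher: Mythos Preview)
Your proposal is correct and follows essentially the paper's approach: the paper fixes one explicit $(2,2)$-divisor $Z$ over $\bQ$ satisfying the conditions of Lemmas~\ref{real} and~\ref{2-adic}, finds a computer point $x \in X_2(k)$, and sets $\alpha'_i = \alpha_i - \alpha_2(x)$, with the $\bQ_2$- and $\bR$-examples obtained by base-changing that same $Z$. Two minor streamlinings you could borrow: over $\bQ$ there is no need to re-run the Brauer--Manin argument for $\alpha'_1$, since the obstruction from $\alpha_1$ already gives $X_1(\bQ)=\emptyset$ and hence $(X_1,\alpha'_1)(\bQ)=\emptyset$ a fortiori; and over $\bQ_2$ and $\bR$, your ``evaluate $\alpha_1$ at every $k$-point of $X_1$'' step is rendered trivial by the constancy of $\inv_v\,\alpha_1$ on $X_1(k_v)$ (Proposition~\ref{proposition-no-point}), so the only genuine search is for a single point of $X_2$ with the correct evaluation of $\alpha_2$.
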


\subsection*{Acknowledgements} 
The authors thank the organizers of the 2015 Arizona Winter School, where this work began. 
We thank Asher Auel, Brendan Hassett, and Sho Tanimoto for useful discussions. 
Above all, we thank Tony V\'arilly-Alvarado for suggesting this project, and for his help and encouragement along the way. 


\section{Construction of the twisted K3 surfaces}
\label{section-construction}

In this section, $k$ denotes a base field of characteristic not equal to $2$.

\subsection{Quadric fibrations} 
\label{subsection-quadric-fibrations}
We start by reviewing some terminology on quadric fibrations. 
Let $S$ be a variety over $k$, i.e. an integral, separated scheme of finite type over $k$. 
Let $\cE$ be a rank $n \geq 2$ vector bundle on $S$, i.e. a locally free $\cO_S$-module of rank $n$. 
Our convention is that the projective bundle of $\cE$ is the morphism
\begin{equation*}
p: \bP(\cE) = \Proj_S(\Sym^{\bullet}(\cE^*)) \to S.
\end{equation*}
A quadric fibration is determined by a line bundle $\cL$ on $S$ and a nonzero section 
\begin{equation*}
s \in \Gamma(\bP(\cE), \cO_{\bP(\cE)}(2) \otimes p^*\cL) = \Gamma(S, \Sym^2(\cE^*) \otimes \cL).
\end{equation*}
Namely, the zero locus of $s$ on $\bP(\cE)$ defines a subvariety $Q$, and the restriction $\pi: Q \to S$ 
of $p: \bP(\cE) \to S$ is the associated \emph{quadric fibration}, which if flat is of relative dimension $n-2$. 
Below we will be specifically interested in flat quadric fibrations of relative dimension $2$, 
which we refer to as \emph{quadric surface fibrations}.

Note that the section of $\Sym^2(\cE^*) \otimes \cL$ defining a quadric fibration
corresponds to a morphism $q: \cE \to \cE^{*} \otimes \cL$. 
Taking the determinant gives rise to a section of $\det(\cE^*)^2 \otimes \cL^n$
whose vanishing defines the \emph{discriminant locus} $D \subset S$, which is a divisor 
provided $\pi: Q \to S$ is generically smooth. 
The fibration $\pi: Q \to S$ is said to have \emph{simple degeneration} if the fiber over every closed 
point of $S$ is a quadric of corank $\leq 1$. 
We note that if $\pi: Q \to S$ is flat and generically smooth and $S$ is smooth over $k$, 
then the discriminant divisor $D$ is smooth over $k$ if and only if $Q$ is smooth over $k$ and 
$\pi$ has simple degeneration~\cite[Proposition 1.6]{quadric-fibrations-auel}.

\subsection{Twisted K3 surfaces} 
\label{subsection-twisted-K3-construction}

Let $V_1$ and $V_2$ be $3$-dimensional vector spaces over $k$. 
We denote by $H_i$ the hyperplane class on $\bP(V_i)$; by abuse of notation, we  
denote by the same letter the pullback of $H_i$ to any variety mapping to $\bP(V_i)$. 
Let 
\begin{equation*}
\pi: Y \to \bP(V_1) \times \bP(V_2) 
\end{equation*}
be the double cover of $\bP(V_1) \times \bP(V_2)$ 
ramified over a smooth divisor $Z$ in the linear system $|2H_1 + 2H_2|$. 
Let $\pr_i :  \bP(V_1) \times \bP(V_2) \to \bP(V_i)$ be the $i$-th projection, 
and let $\pi_i = \pr_i \circ \pi: Y \to \bP(V_i)$.

\begin{lemma}
\label{lemma-quadric-fibration}
Let $\cE_1 = (V_2 \otimes \cO) \oplus \cO(H_1)$ on $\bP(V_1)$ and 
$\cE_2 = (V_1 \otimes \cO) \oplus \cO(H_2)$ on $\bP(V_2)$. 
Then for $i=1,2$ there is a commutative diagram
\begin{equation*}
\xymatrix{
Y \ar[d]_{\pi_i} \ar[r]^{j_i} & \bP(\cE_i) \ar[dl]^{p_i} \\
\bP(V_i)
}
\end{equation*}
where $j_i$ is a closed immersion with $j_1^*\cO_{\bP(\cE_1)}(1) = \cO_Y(H_2)$ 
and $j_2^*\cO_{\bP(\cE_2)}(1) = \cO_Y(H_1)$.
Moreover, $Y$ is cut out on $\bP(\cE_i)$ by a section of $\cO_{\bP(\cE_i)}(2) \otimes \cO(2H_i)$, 
so that $\pi_i$ is a quadric surface fibration.
\end{lemma}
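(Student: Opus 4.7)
The plan is to focus on $i=1$; the case $i=2$ is entirely symmetric under swapping $V_1 \leftrightarrow V_2$. First I would build $j_1$ from the universal property of $\bP(\cE_1) = \Proj\Sym\cE_1^*$ after a pushforward computation. Using $\pi_*\cO_Y = \cO \oplus \cO(-H_1-H_2)$ (the defining decomposition of the double cover), the projection formula together with K\"unneth gives
\[
(\pi_1)_*\cO_Y(H_2) \;=\; (\pr_1)_*\bigl(\cO(H_2) \oplus \cO(-H_1)\bigr) \;=\; (V_2^*\otimes \cO_{\bP(V_1)}) \oplus \cO(-H_1) \;=\; \cE_1^*,
\]
and the higher direct images vanish. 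The counit $\pi_1^*\cE_1^* \to \cO_Y(H_2)$ is then surjective---on the $V_2^*$-summand it is the evaluation $v^* \mapsto v^*(z)$ at the $V_2$-coordinate of $\pi(y) \in \bP(V_1)\times\bP(V_2)$, which is nowhere zero---and it produces the desired morphism $j_1: Y \to \bP(\cE_1)$ over $\bP(V_1)$ with $j_1^*\cO_{\bP(\cE_1)}(1) = \cO_Y(H_2)$.

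Next I would exhibit the defining equation. Let $z_0,z_1,z_2 \in H^0(\cO_{\bP(\cE_1)}(1))$ and $t \in H^0(\cO_{\bP(\cE_1)}(1) \otimes p_1^*\cO(H_1))$ be the tautological sections coming from the summands $V_2^*\otimes\cO \hookrightarrow \cE_1^*$ and $\cO(-H_1)\hookrightarrow\cE_1^*$, respectively. Unwinding the construction of $j_1$, one sees that $j_1^*z_i$ is the usual pullback $\pi^*z_i \in H^0(\cO_Y(H_2))$ from $\bP(V_2)$, while $j_1^*t$ is the canonical square-root section $s \in H^0(\cO_Y(H_1+H_2))$ of the double cover (characterized by $s^2 = \pi^*f$), because both correspond to the summand $\cO(-H_1-H_2) \hookrightarrow \pi_*\cO_Y$ under the respective adjunctions. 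Writing $f = \sum_{i,j}a_{ij}(y)\,z_iz_j$ with $a_{ij} \in \Sym^2 V_1^*$, the section
\[
\sigma \;:=\; t^2 \;-\; \sum_{i,j} p_1^*a_{ij}\cdot z_iz_j \;\in\; H^0\bigl(\bP(\cE_1),\, \cO_{\bP(\cE_1)}(2) \otimes p_1^*\cO(2H_1)\bigr)
\]
then pulls back along $j_1$ to $s^2 - \pi^*f = 0$, so $j_1$ factors through $\tilde Y := V(\sigma) \subset \bP(\cE_1)$.

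Finally, to see $j_1: Y \to \tilde Y$ is an isomorphism---giving both the closed immersion property and the identification of the cutting-out section---I would argue fiberwise over $[a] \in \bP(V_1)$: the fiber $Y_{[a]}$ is the double cover of $\bP(V_2)$ branched over the conic $\{f([a],\cdot)=0\}$, while $\tilde Y_{[a]} = \{t^2 = f([a],z)\} \subset \bP^3 = p_1^{-1}([a])$ is a quadric surface, and projection from $[0{:}0{:}0{:}1] \in \bP^3$ realizes this quadric as exactly that double cover. Thus $j_1|_{Y_{[a]}} \colon Y_{[a]} \xrightarrow{\sim} \tilde Y_{[a]}$, and properness of both $Y$ and $\tilde Y$ over $\bP(V_1)$ upgrades this fiberwise isomorphism to a global one. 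I expect the main bookkeeping obstacle to be the twist computation showing that $\sigma$ has no cross term $z_it$---equivalently, that the middle summand $V_2^*\otimes\cO(H_1)$ in
\[
\Sym^2(\cE_1^*) \otimes \cO(2H_1) \;=\; \Sym^2 V_2^*\otimes\cO(2H_1) \,\oplus\, V_2^*\otimes\cO(H_1)\,\oplus\, \cO
\]
contributes nothing---together with verifying that the two canonical descriptions of $s$ (via $\pi_*\cO_Y$ and via $(\pi_1)_*\cO_Y(H_2)$) really do coincide; both amount to careful adjunction bookkeeping rather than any substantial geometry.
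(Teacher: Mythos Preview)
Your argument is correct, and the first half---the pushforward computation $(\pi_1)_*\cO_Y(H_2)\cong\cE_1^*$ and the construction of $j_1$ via the universal property---is exactly what the paper does. The difference lies in how the class of $Y$ in $\bP(\cE_1)$ is determined. You explicitly write down the section $\sigma=t^2-\sum a_{ij}z_iz_j$ and then verify that $j_1$ induces an isomorphism $Y\xrightarrow{\sim}V(\sigma)$ by a fiberwise comparison (double cover of $\bP^2$ branched over a conic versus the quadric $\{t^2=f([a],z)\}$ in $\bP^3$). The paper instead argues that $j_1$ is a closed immersion by working locally, and then computes the divisor class $[Y]=2\zeta+2H_1$ in $\Pic(\bP(\cE_1))$ purely numerically, using the intersection numbers $H_1^2H_2^2=2$ and $H_1H_2^3=0$ on $Y$. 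Your route is more hands-on and yields the actual defining equation (useful if one later wants the Clifford algebra explicitly), while the paper's route is shorter and avoids the adjunction bookkeeping you flag at the end. One small point: the sentence ``properness \dots\ upgrades this fiberwise isomorphism to a global one'' is a bit brisk---the cleanest justification is simply to observe that over a trivializing affine open of $\bP(V_1)$ both $Y$ and $V(\sigma)$ are given by identical equations $t^2=f$, so the isomorphism is visible locally.
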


\begin{proof}
Consider the case $i = 1$. 
The morphism $j_1 : Y \to \bP(\cE_1)$ is given by the $\pi_1$-very ample line bundle $\cO_Y(H_2)$. 
More precisely, using 
$\pi_*(\cO_Y) = \cO \oplus \cO(-H_1-H_2)$, 
we find 
\begin{align*}
\pi_{1*}(\cO_Y(H_2)) & = \pr_{1*}(\cO(H_2) \oplus \cO(-H_1)) \\
& = (V_2^* \otimes \cO) \oplus \cO(-H_1) \\
& = \cE_1^*.
\end{align*}
Working locally on $\bP(V_1)$, we see the canonical map 
$\pi_1^*\cE_1^* = \pi_1^*\pi_{1*}(\cO_Y(H_2)) \to \cO_Y(H_2)$ is 
surjective and the corresponding morphism $j_1: Y \to \bP(\cE_1)$ is an immersion. 
By construction $j_1^*\cO_{\bP(\cE_1)}(1) = \cO_Y(H_2)$. 
Moreover, if $\zeta$ denotes the class of $\cO_{\bP(\cE_1)}(1)$ in $\Pic(\bP(\cE_1))$, 
then it is easy to compute 
\begin{equation*}
[Y] = 2\zeta + 2H_1 \in \Pic(\bP(\cE_1))
\end{equation*}
by using the intersection numbers $H_1^2H_2^2 = 2$ and $H_1H_2^3 = 0$ on $Y$.
So $Y$ is indeed a quadric surface fibration, cut out by a section of $\cO_{\bP(\cE_1)}(2) \otimes \cO(2H_1)$ 
on $\bP(\cE_1)$. 
\end{proof}

Let $D_i$ denote the discriminant divisor of $\pi_i: Y \to \bP(V_i)$. 
It follows from the lemma that $D_i$ is defined by a section of 
$\det(\cE_i^*)^2 \otimes \cO(8H_i) = \cO(6H_i)$, i.e. $D_i \subset \bP(V_i)$ is a sextic curve. 
Let $f_i : X_i \to \bP(V_i)$ be the double cover of $\bP(V_i)$ ramified over $D_i$. 
If $D_i$ is smooth (equivalently, if $\pi_i$ has simple degeneration), then 
$X_i$ is a smooth K3 surface. 
Moreover, $X_i$ comes equipped with an Azumaya algebra $\pA_i$, as follows. 

In general, consider a generically smooth quadric surface fibration $\pi: Q \to S$ over a smooth 
$k$-variety $S$, with smooth discriminant divisor and simple degeneration. 
Let $\pF \to S$ be the relative Fano variety of lines of $\pi$. 
It follows from~\cite[Proposition~3.3]{transcendental-obstructions} 
that Stein factorization gives morphisms
\begin{equation*}
\pF \xrightarrow{\, g \,} X \xrightarrow{\, f \,} S,
\end{equation*} 
where $g$ is an \'{e}tale locally trivial $\bP^1$-bundle over $X$ 
and $f$ is the double cover of $S$ branched along the discriminant divisor $D$. 
The morphism $g$ corresponds to an Azumaya algebra $\pA$ on $X$. 

Applying this discussion to $\pi_i: Y \to \bP(V_i)$, we see that if $D_i$ is smooth, 
then $X_i$ is equipped with an Azumaya algebra $\pA_i$. 
Of course $\pA_i$ represents a Brauer class $\alpha_i \in \Br(X_i)$, so we can regard the pair 
$(X_i, \pA_i)$ as a twisted K3 surface.


\section{Derived equivalence of the twisted K3 surfaces}
\label{section-equivalence}

In this section, we prove the twisted K3 surfaces $(X_i, \pA_i)$ of the 
previous section are derived equivalent. 
Our proof works over any field $k$ of characteristic not equal to~$2$, 
and gives an explicit functor inducing the equivalence.
The key tool is Kuznetsov's semiorthogonal decomposition of the 
derived category of a quadric fibration~\cite{quadric-fibrations-kuznetsov}. 

\subsection{Conventions}
\label{section-conventions}
All triangulated categories appearing below will be $k$-linear, and functors between 
them will be $k$-linear and exact. 

For a variety $X$, 
we denote by $\Db(X)$ the bounded derived category of coherent sheaves on $X$, 
regarded as a triangulated category.  
More generally, for any sheaf of $\cO_X$-algebras $\pA$ 
which is coherent as an $\cO_X$-module, we denote by $\Db(X, \pA)$ the 
bounded derived category of coherent sheaves of right $\pA$-modules on $X$. 
We note that if $\pA$ is an Azumaya algebra corresponding to a Brauer class $\alpha \in \Br(X)$, 
then the bounded derived category of $\alpha$-twisted sheaves $\Db(X, \alpha)$ is equivalent 
to $\Db(X, \pA)$. 

As a rule, all functors we consider are derived. 
More precisely, for a morphism of varieties $f: X \to Y$, we simply write $f_*: \Db(X) \to \Db(Y)$ 
for the derived pushforward (provided $f$ is proper) and $f^*: \Db(Y) \to \Db(X)$ 
for the derived pullback (provided $f$ has finite $\Tor$-dimension).  
Similarly, for $\cF, \cG \in \Db(X)$, we write $\cF \otimes \cG \in \Db(X)$ for the derived 
tensor product. 

\subsection{Semiorthogonal decompositions}
\label{section-sod}
One way to understand the derived category of a variety (or more generally a triangulated 
category) is by ``decomposing'' it into simpler pieces. This is formalized by the notion of a 
semiorthogonal decomposition, which plays a central role in the rest of this section. 
We summarize the rudiments of this theory; see e.g.~\cite{bondal} and \cite{bondal-kapranov} 
for a more detailed exposition. 

\begin{definition}
Let $\cT$ be a triangulated category. 
A \emph{semiorthogonal decomposition} 
\begin{equation*}
\cT = \langle \cA_1, \dots, \cA_n \rangle
\end{equation*}
is a sequence of full triangulated subcategories $\cA_1, \dots, \cA_n$ of $\cT$ 
--- called the \emph{components} of the decomposition --- such that: 
\begin{enumerate}
\item $\Hom(\cF, \cG) = 0$ for all $\cF \in \cA_i, \cG \in \cA_j$ if $i>j$.
\item For any $\cF \in \cT$, there is a sequence of morphisms
\begin{equation*}
0 = \cF_n \to \cF_{n-1} \to \cdots \to \cF_1 \to \cF_0 = \cF,
\end{equation*}
such that $\Cone(\cF_i \to \cF_{i-1}) \in \cA_i$.
\end{enumerate}
\end{definition}

Semiorthogonal decompositions are closely related to the notion of 
an \emph{admissible subcategory} of a triangulated category. 
Such a subcategory $\cA \subset \cT$ is by definition a 
full triangulated subcategory such that the inclusion $i: \cA \hookrightarrow \cT$ 
admits right and left adjoints $i^!: \cT \to \cA$ and $i^*: \cT \to \cA$. 
For $X$ a smooth proper variety over $k$, the components 
of any semiorthogonal decomposition of $\Db(X)$ are in fact admissible subcategories. 

The simplest examples of admissible subcategories come from exceptional objects. 
An object $\cF \in \cT$ of a triangulated category is called \emph{exceptional} if 
\begin{equation*}
\Hom(\cF, \cF[p]) = 
\left \{ \begin{array}{ll}
k & \mbox{if } p = 0, \\
0 & \mbox{if } p \neq 0.
\end{array}
\right.
\end{equation*}
If $X$ is a proper variety and $\cF \in \Db(X)$ is exceptional, then the full triangulated 
subcategory $\langle \cF \rangle \subset \Db(X)$ generated by $\cF$ is admissible and equivalent 
to the derived category of a point via 
$\Db(\Spec(k)) \to \Db(X): V \mapsto V \otimes \cF$.  
To simplify notation, we write $\cF$ in place of $\langle \cF \rangle$ when 
$\langle \cF \rangle$ appears as a component in a semiorthogonal decomposition, 
i.e.\ instead of $\Db(X) = \langle \dots, \langle \cF \rangle, \dots \rangle$ we write 
$\Db(X) = \langle \dots, \cF , \dots \rangle$.

\begin{example}
\label{example-beilinson}
It is easy to see any line bundle on projective space $\bP^n$ 
is exceptional as an object of $\Db(\bP^n)$. 
In fact, Beilinson~\cite{beilinson} showed $\Db(\bP^n)$ has a semiorthogonal decomposition 
into $n+1$ line bundles, namely
\begin{equation*}
\Db(\bP^n) = \langle \cO, \cO(1), \dots, \cO(n) \rangle. 
\end{equation*}
\end{example}

Given one semiorthogonal decomposition of a triangulated category $\cT$, others 
can be obtained via mutation functors. 
If $i: \cA \hookrightarrow \cT$ is the inclusion of an admissible subcategory, 
the \emph{left} and \emph{right mutation functors} $\rL_{\cA}: \cT \to \cT$ 
and $\rR_{\cA}: \cT \to \cT$ are 
defined by the formulas 
\begin{equation*}
\rL_{\cA}(\cF) = \Cone(ii^!\cF \to \cF) 
\quad \text{and} \quad 
\rR_{\cA}(\cF) = \Cone(\cF \to ii^*\cF)[-1], 
\end{equation*}
where $ii^!\cF \to \cF$ and $\cF \to ii^*\cF$ are the counit and unit morphisms of the 
adjunctions. 
These functors satisfy the following basic properties.  

\begin{lemma}
\label{lemma-mutation-functor-properties}
The mutation functors $\rL_{\cA}$ and $\rR_{\cA}$ annihilate $\cA$. 
Moreover, they restrict to mutually inverse equivalences  
\begin{equation*}
\rL_{\cA}|_{{}^{\perp}\cA} : {}^{\perp}\cA \xrightarrow{\sim} \cA^{\perp} 
\quad \text{and} \quad
\rR_{\cA}|_{\cA^{\perp}}  : \cA^{\perp} \xrightarrow{\sim} {}^{\perp}\cA, 
\end{equation*}
where $\cA^{\perp}$ and ${}^{\perp}\cA$ are the \emph{right} and \emph{left orthogonal} categories 
to $\cA$, i.e. the full subcategories of $\cT$ defined by 
\begin{align*}
\cA^{\perp} & = \left \{ \cF \in \cT ~ | ~ \Hom(\cG, \cF) = 0 \text{ for all } \cG \in \cA \right \},  \\
{}^{\perp}\cA & =  \left \{ \cF \in \cT ~ | ~ \Hom(\cF, \cG) = 0 \text{ for all } \cG \in \cA \right \}.
\end{align*} 
\end{lemma}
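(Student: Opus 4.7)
The plan is to check the two assertions of the lemma in turn — first that each mutation kills $\cA$, then that the mutations restrict to inverse equivalences between the orthogonal categories — using only formal manipulations with the adjunctions $i \dashv i^!$ and $i^* \dashv i$ and with distinguished triangles.

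For the annihilation claim, I would use that the inclusion $i$ is fully faithful, so $i^!i \cong \id_\cA$ and $i^*i \cong \id_\cA$. Hence for $\cF \in \cA$ the counit $ii^!\cF \to \cF$ and the unit $\cF \to ii^*\cF$ are both isomorphisms, and their cones vanish.

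To see $\rL_\cA\cF \in \cA^\perp$ for every $\cF \in \cT$, I would apply $\Hom(i\cG, -)$ to the defining triangle $ii^!\cF \to \cF \to \rL_\cA\cF$ for each $\cG \in \cA$. The adjunction $(i, i^!)$ together with $i^!i \cong \id$ identifies the first arrow of the resulting long exact sequence with the identity on $\Hom(\cG, i^!\cF)$, and the same holds after shifting, forcing $\Hom(i\cG, \rL_\cA\cF) = 0$. The dual argument, using $(i^*, i)$, shows $\rR_\cA\cF \in {}^\perp\cA$.

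For the equivalences, the key observation is that $i^*\cF = 0$ when $\cF \in {}^\perp\cA$ and $i^!\cF = 0$ when $\cF \in \cA^\perp$, both immediate from the corresponding adjunctions. Given $\cF \in {}^\perp\cA$, applying $i^*$ to the triangle defining $\rL_\cA\cF$ yields $i^*\rL_\cA\cF \cong i^!\cF[1]$ because $i^*\cF = 0$ and $i^*i \cong \id$, and rotating the original triangle to $\cF \to \rL_\cA\cF \to ii^!\cF[1] \to \cF[1]$ then identifies $\Cone(\rL_\cA\cF \to ii^*\rL_\cA\cF)[-1]$ with $\cF$, i.e.\ $\rR_\cA\rL_\cA\cF \cong \cF$. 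The symmetric calculation gives $\rL_\cA\rR_\cA\cF \cong \cF$ for $\cF \in \cA^\perp$.

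The main technical point I expect to have to handle carefully is the matching of morphisms in the last step: one must verify that the isomorphism $i^*\rL_\cA\cF \cong i^!\cF[1]$ produced by applying $i^*$ to the defining triangle is compatible, under the unit map $\rL_\cA\cF \to ii^*\rL_\cA\cF$ used in the definition of $\rR_\cA$, with the boundary map $\rL_\cA\cF \to ii^!\cF[1]$ in the rotated triangle, so that the two cones really do coincide and the inverse equivalence is realized on the nose. Once this naturality check is in place, the lemma is entirely formal and uses nothing beyond admissibility of $\cA$.
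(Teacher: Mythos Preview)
The paper states this lemma without proof, treating it as standard background (it is a well-known result going back to Bondal and Bondal--Kapranov, the references cited in the paper for this section). So there is no proof in the paper to compare against.

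Your argument is correct and is essentially the standard one. One small suggestion: the ``technical point'' you flag in the last paragraph can be disposed of cleanly without any delicate naturality check. Once you know $\rL_{\cA}\cF \in \cA^{\perp}$ and you rotate the defining triangle to $\cF \to \rL_{\cA}\cF \to ii^{!}\cF[1]$, you have exhibited a distinguished triangle for $\rL_{\cA}\cF$ whose first term lies in ${}^{\perp}\cA$ and whose third term lies in $\cA$. But the semiorthogonal decomposition $\cT = \langle \cA, {}^{\perp}\cA \rangle$ (which exists precisely because $\cA$ is admissible) says such a triangle is unique up to unique isomorphism, and the defining triangle $\rR_{\cA}(\rL_{\cA}\cF) \to \rL_{\cA}\cF \to ii^{*}\rL_{\cA}\cF$ is another one. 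Hence $\rR_{\cA}\rL_{\cA}\cF \cong \cF$ without ever having to match specific morphisms. This bypasses the compatibility verification entirely.
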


The following lemma describes the action of mutation functors on a 
semiorthogonal decomposition. 
\begin{lemma}
\label{lemma-mutations}
Let $\cT = \langle \cA_1,\dots,\cA_n \rangle$ be a semiorthogonal decomposition 
with admissible components. 
Then for $1 \leq i \leq n-1$ there is a semiorthogonal decomposition
\begin{equation*}
\cT  = \langle \cA_1, \dots, \cA_{i-1}, \rL_{\cA_i}(\cA_{i+1}), \cA_i, \cA_{i+2}, \dots, \cA_n \rangle,
\end{equation*}
and for $2 \leq i \leq n$ there is a semiorthogonal decomposition
\begin{equation*}
\cT  = \langle \cA_1, \dots, \cA_{i-2}, \cA_i, \rR_{\cA_i}(\cA_{i-1}), \cA_{i+1}, \dots, \cA_n \rangle.
\end{equation*}
\end{lemma}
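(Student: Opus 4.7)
The plan is to verify the two axioms of a semiorthogonal decomposition---semiorthogonality and the filtration condition---for the proposed new collection. I focus on the left mutation statement; the right mutation case follows by an analogous argument, replacing the counit triangle $ii^!\cF \to \cF$ with the unit triangle $\cF \to ii^*\cF$. Set $\cA'_i = \rL_{\cA_i}(\cA_{i+1})$. The starting observation will be that the original semiorthogonality gives $\cA_{i+1} \subseteq {}^{\perp}\cA_i$, so by Lemma~\ref{lemma-mutation-functor-properties}, $\cA'_i \subseteq \cA_i^{\perp}$. Moreover, every object of $\cA'_i$ fits into a triangle $P \to \cH \to \rL_{\cA_i}\cH$ with $P \in \cA_i$ and $\cH \in \cA_{i+1}$ (where $P = ii^!\cH$ in the notation of the definition of $\rL_{\cA_i}$).

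For the semiorthogonality axiom, the only new vanishings to check are those involving $\cA'_i$ at position $i$. The relation $\Hom(\cA_i, \cA'_i) = 0$ is immediate from $\cA'_i \subseteq \cA_i^{\perp}$. For $\cG \in \cA_k$ with $k < i$ and $\cF = \rL_{\cA_i}\cH \in \cA'_i$, I will apply $\Hom(-, \cG)$ to the triangle $P \to \cH \to \cF$ and conclude from the vanishings $\Hom(\cH, \cG) = 0$ and $\Hom(P, \cG[m]) = 0$ for all $m$, all of which follow from the original semiorthogonality together with the fact that each $\cA_j$ is closed under shifts. The case $k > i+1$ is strictly parallel, applying $\Hom(\cG, -)$ to the same triangle.

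For the filtration axiom, given $\cF \in \cT$ with its original filtration $0 = \cF_n \to \cdots \to \cF_0 = \cF$ satisfying $\Cone(\cF_k \to \cF_{k-1}) \in \cA_k$, I plan to modify it only at position $i$. Setting $\cG = \Cone(\cF_{i+1} \to \cF_{i-1})$, the octahedral axiom applied to $\cF_{i+1} \to \cF_i \to \cF_{i-1}$ places $\cG$ in a triangle $\cF_i/\cF_{i+1} \to \cG \to \cF_{i-1}/\cF_i$ with pieces in $\cA_{i+1}$ and $\cA_i$ respectively. Since $\rL_{\cA_i}$ is a triangulated functor that annihilates $\cA_i$, applying it to this triangle yields $\rL_{\cA_i}\cG \cong \rL_{\cA_i}(\cF_i/\cF_{i+1}) \in \cA'_i$, so the canonical triangle $P' \to \cG \to \rL_{\cA_i}\cG$ (with $P' \in \cA_i$) has pieces in $\cA_i$ and $\cA'_i$. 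A second application of the octahedron, to the composition $\cF_{i+1} \to \cF_{i-1}$ together with this triangle, will produce an object $\cF'_i$ fitting into triangles
\begin{equation*}
\cF_{i+1} \to \cF'_i \to P', \qquad \cF'_i \to \cF_{i-1} \to \rL_{\cA_i}\cG,
\end{equation*}
and replacing $\cF_i$ by $\cF'_i$ yields the required new filtration.

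The main obstacle I anticipate is precisely the octahedral construction of $\cF'_i$ together with the identification $\rL_{\cA_i}\cG \in \cA'_i$. The subtle point is recognizing that the $\cA_i$-piece of $\cG$ is automatically killed by $\rL_{\cA_i}$, so that after mutation only the $\cA_{i+1}$-piece survives and lands in $\rL_{\cA_i}(\cA_{i+1})$; this step is what forces us to use that $\rL_{\cA_i}$ is a genuine triangulated endofunctor of $\cT$ rather than only an equivalence ${}^{\perp}\cA_i \xrightarrow{\sim} \cA_i^{\perp}$. Once this lifting is in hand, the semiorthogonality verifications reduce to routine bookkeeping against the ambient decomposition.
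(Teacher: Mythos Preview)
The paper does not prove this lemma at all: it is stated as a standard fact from the theory of semiorthogonal decompositions, with the surrounding discussion pointing to \cite{bondal} and \cite{bondal-kapranov} for background. So there is nothing to compare against on the paper's side.

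That said, your argument is the standard one and is correct. The semiorthogonality checks are fine: every object of $\cA'_i = \rL_{\cA_i}(\cA_{i+1})$ sits in a triangle with pieces in $\cA_i$ and $\cA_{i+1}$, so the needed $\Hom$-vanishings against $\cA_k$ for $k<i$ and $k>i+1$ follow from the original ones, while $\Hom(\cA_i,\cA'_i)=0$ comes from $\cA'_i \subset \cA_i^{\perp}$. For the filtration, your two applications of the octahedral axiom are exactly right. Concretely, applying the octahedron to the composite $\cF_{i-1} \to \cG \to \rL_{\cA_i}\cG$ and defining $\cF'_i$ as the fiber of $\cF_{i-1} \to \rL_{\cA_i}\cG$ yields the two triangles
\[
\cF_{i+1} \to \cF'_i \to P', \qquad \cF'_i \to \cF_{i-1} \to \rL_{\cA_i}\cG,
\]
with $P' \in \cA_i$ and $\rL_{\cA_i}\cG \cong \rL_{\cA_i}(\cF_i/\cF_{i+1}) \in \cA'_i$, the last isomorphism because $\rL_{\cA_i}$ is triangulated and kills $\cA_i$. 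Splicing $\cF'_i$ into the old filtration in place of $\cF_i$ gives the required one. The right-mutation statement is indeed symmetric.
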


We will also need the following lemma, which allows us to compute the effect of 
a mutation functor in a special case. It follows easily from Serre duality.

\begin{lemma}
\label{lemma-mutation-serre-functor}
Let $X$ be a smooth projective variety over $k$, and let 
$\Db(X) = \langle \cA_1, \dots, \cA_n \rangle$ be a semiorthogonal decomposition.  
Then $\rL_{\langle \cA_1, \dots, \cA_{n-1}\rangle}(\cA_n) = \cA_n \otimes \omega_X$, where 
$\cA_n \otimes \omega_X$ denotes the image of $\cA_n$ under the autoequivalence 
$\cF \mapsto \cF \otimes \omega_X$ of $\Db(X)$. 
\end{lemma}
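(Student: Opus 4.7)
The plan is to identify both $\rL_{\langle \cA_1, \dots, \cA_{n-1}\rangle}(\cA_n)$ and $\cA_n \otimes \omega_X$ with the right orthogonal subcategory $\cB^{\perp} \subset \Db(X)$, where I abbreviate $\cB := \langle \cA_1, \dots, \cA_{n-1}\rangle$.

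First I would observe that the semiorthogonal decomposition structure forces ${}^{\perp}\cB = \cA_n$: the inclusion $\cA_n \subseteq {}^{\perp}\cB$ is immediate from semiorthogonality, while the reverse inclusion follows by applying the filtration from the SOD axioms to any $F \in {}^{\perp}\cB$ (grouping the first $n-1$ components into $\cB$ yields a triangle $F_n \to F \to K$ with $F_n \in \cA_n$ and $K \in \cB$; the hypothesis on $F$ forces $\Hom(F,K)=0$, and $\Hom(F_n, K) = 0$ already by semiorthogonality, so $K = 0$). Granted this, Lemma~\ref{lemma-mutation-functor-properties} immediately gives $\rL_{\cB}(\cA_n) = \cB^{\perp}$ as a subcategory of $\Db(X)$.

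The substantive content is then the equality $\cA_n \otimes \omega_X = \cB^{\perp}$, which I would extract from Serre duality on the smooth projective variety $X$. Writing $d := \dim X$, this reads
\begin{equation*}
\Hom(A, B[j]) \cong \Hom(B, A \otimes \omega_X[d-j])^*
\end{equation*}
for $A, B \in \Db(X)$ and $j \in \bZ$. For the inclusion $\cA_n \otimes \omega_X \subseteq \cB^{\perp}$: given $F \in \cA_n$ and $G \in \cB$, semiorthogonality gives $\Hom(F, G[j]) = 0$, and the displayed isomorphism then forces $\Hom(G, F \otimes \omega_X[d-j]) = 0$ for every $j$. For the reverse inclusion: given $F \in \cB^{\perp}$, I would check that $F \otimes \omega_X^{-1} \in {}^{\perp}\cB = \cA_n$ via
\begin{equation*}
\Hom(F \otimes \omega_X^{-1}, G[j]) \cong \Hom(F, G \otimes \omega_X[j]) \cong \Hom(G, F[d-j])^* = 0
\end{equation*}
for any $G \in \cB$, where the second isomorphism is Serre duality combined with the fact that tensoring by $\omega_X$ is an autoequivalence, and the final vanishing is the hypothesis $F \in \cB^{\perp}$.

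The argument is genuinely a formal consequence of Serre duality and the mutation machinery of Lemma~\ref{lemma-mutation-functor-properties}, so there is no real obstacle; one only needs to be careful with the bookkeeping of Serre duality shifts and with which direction of $\Hom$ corresponds to left versus right orthogonality.
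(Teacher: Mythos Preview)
Your argument is correct and is precisely the Serre duality computation the paper alludes to; the paper itself offers no details beyond the sentence ``It follows easily from Serre duality.'' The identification ${}^{\perp}\cB = \cA_n$ together with the Serre duality step showing $\cB^{\perp} = \cA_n \otimes \omega_X$ are exactly the expected ingredients.
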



\subsection{Derived categories of quadric fibrations} 
\label{section-quadric-fibrations-sod}
Let $\pi: Q \to S$ be a quadric fibration associated to a rank $n$ vector bundle $\cE$ and a section 
of $\Sym^2(\cE^*) \otimes \cL$, as in Section~\ref{subsection-quadric-fibrations}. 
Then there is an associated \emph{even Clifford algebra} $\Cliff_0$,
which is a sheaf of algebras on $S$ given as a certain 
quotient of the tensor algebra $\rT^{\bullet}(\cE \otimes \cE \otimes \cL^*)$. 
For the precise definition, see~\cite[Section 1.5]{quadric-fibrations-auel} 
(cf.~\cite[Section 3.3]{quadric-fibrations-kuznetsov}). 
We note that $\Cliff_0$ admits an $\cO_S$-module 
filtration of length $\lfloor \frac{n}{2} \rfloor$ with 
associated graded pieces $\wedge^{2i} \cE \otimes (\cL^*)^i$.

In case the fibration $\pi: Q \to S$ is flat and $S$ is smooth over $k$, 
Kuznestov~\cite{quadric-fibrations-kuznetsov} established a semiorthogonal 
decomposition of $\Db(Q)$ into a copy of $\Db(S, \Cliff_0)$ and a number of copies of $\Db(S)$. 
In fact, Kuznetsov stated his result under the assumption that $k$ is algebraically closed of characteristic $0$, 
but as explained in \cite[Theorem~2.11]{quadric-fibrations-auel}, the proof works without this hypothesis.

\begin{theorem}[{\cite[Theorem 4.2]{quadric-fibrations-kuznetsov}}]
\label{theorem-quadric-fibration-sod}
Let $\pi: Q \to S$ be a flat quadric fibration of relative dimension $n-2$ over a smooth $k$-variety $S$. 
Let $\cO_{Q}(1)$ denote the restriction of $\cO_{\bP(\cE)}(1)$ to $Q$. 
Then the functor $\pi^*: \Db(S) \to \Db(Q)$ is fully faithful, and there is a fully faithful functor 
$\Phi: \Db(S, \Cliff_0) \to \Db(Q)$ such that there is a semiorthogonal decomposition 
\begin{equation*}
\Db(Q) = \langle \Phi (\Db(S, \Cliff_0)), \pi^* \Db(S) \otimes \cO_{Q}(1), \dots, 
\pi^* \Db(S) \otimes \cO_{Q}(n-2) \rangle.
\end{equation*}
\end{theorem}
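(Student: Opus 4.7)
The plan is to bootstrap from two classical resolutions: the relative Beilinson resolution for $p\colon \bP(\cE) \to S$ and the Koszul resolution
\begin{equation*}
0 \to \cO_{\bP(\cE)}(-2) \otimes p^*\cL^{-1} \to \cO_{\bP(\cE)} \to \cO_Q \to 0
\end{equation*}
expressing $Q$ as a relative quadric hypersurface in $\bP(\cE)$. Together these will be used both to verify the semiorthogonality of the listed subcategories and to pin down the ``Clifford'' component via a resolution of the structure sheaf of the relative diagonal $\Delta \subset Q \times_S Q$.

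First I would dispose of the easy half of the statement. By the projection formula, the full faithfulness of $\pi^*$ and pairwise semiorthogonality of the subcategories $\pi^*\Db(S)\otimes \cO_Q(i)$ for $i=1,\dots,n-2$ reduce to computing $\pi_*\cO_Q(m)$ for $-(n-3) \leq m \leq 0$. Twisting the Koszul sequence above by $\cO_{\bP(\cE)}(m)$ and pushing forward along $p$, then invoking the standard vanishing $p_* \cO_{\bP(\cE)}(m) = 0$ for $-n+1 \leq m \leq -1$, yields $\pi_* \cO_Q = \cO_S$ and $\pi_*\cO_Q(-k) = 0$ for $1 \leq k \leq n-3$. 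This immediately gives the full faithfulness of $\pi^*$ and the desired semiorthogonality.

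The substantive work lies in constructing $\Phi$ and identifying its image with the orthogonal complement
\begin{equation*}
\cB := {}^{\perp}\langle \pi^*\Db(S)\otimes \cO_Q(1),\ldots, \pi^*\Db(S)\otimes\cO_Q(n-2)\rangle.
\end{equation*}
I would construct a locally free right $\Cliff_0$-module $\mathcal{S}$ on $Q$ whose local structure reproduces the spinor representation of the Clifford algebra, assembled globally from the filtration of $\Cliff_0$ with graded pieces $\wedge^{2i}\cE \otimes (\cL^*)^i$, twisted appropriately by powers of $\cO_Q(1)$; the right $\Cliff_0$-action arises from multiplication by the defining section of $Q$. Then $\Phi$ is defined as the Fourier--Mukai-type functor with kernel $\mathcal{S}$, and basic adjunction properties of $\Cliff_0$-modules show it lands in $\cB$.

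The main obstacle, and the step I expect to require real work, is to establish simultaneously that $\Phi$ is fully faithful and that its image equals $\cB$. I would handle both at once via an explicit resolution of $\cO_\Delta$ on $Q \times_S Q$ obtained by combining the relative Beilinson resolution on $\bP(\cE)\times_S \bP(\cE)$ with the two Koszul resolutions for the embeddings $Q \hookrightarrow \bP(\cE)$. After cancelling the pieces annihilated by the vanishing computation above, the surviving terms in this resolution are precisely $\cO_Q(-i)\boxtimes_S \cO_Q(i)$ for $i = 1,\ldots, n-2$, together with a ``spinor'' term built from $\mathcal{S}$ and its $\Cliff_0$-action. Applied to any $\cF \in \Db(Q)$, this resolution decomposes $\cF$ into components lying in the listed twists of $\pi^*\Db(S)$ and in $\Phi(\Db(S,\Cliff_0))$, yielding completeness of the semiorthogonal collection and, as a byproduct, the full faithfulness of $\Phi$. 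The delicate step is identifying the spinor term as a kernel valued in $\Cliff_0$-bimodules on $S$; this is the combinatorial heart of the argument and the place where Kuznetsov's construction of the even Clifford algebra really enters.
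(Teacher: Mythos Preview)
The paper does not prove this theorem. It is quoted verbatim as \cite[Theorem 4.2]{quadric-fibrations-kuznetsov}, with the additional remark that the original hypothesis that $k$ be algebraically closed of characteristic $0$ is unnecessary, as explained in \cite[Theorem~2.11]{quadric-fibrations-auel}. The only further comment in the paper is that $\Phi$ is given by an explicit Fourier--Mukai kernel, referring to \cite[Section~4]{quadric-fibrations-kuznetsov}. So there is no ``paper's own proof'' to compare your proposal against.

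That said, your outline is broadly in the spirit of Kuznetsov's original argument: the easy part (full faithfulness of $\pi^*$ and semiorthogonality among the twists) does come down to the Koszul resolution and the vanishing of $p_*\cO_{\bP(\cE)}(m)$ in the indicated range, and the hard part does proceed by constructing spinor-type sheaves carrying a $\Cliff_0$-module structure and then showing generation. One caution: in Kuznetsov's paper the generation step is not literally carried out by writing a single resolution of $\cO_\Delta$ on $Q\times_S Q$ as you describe; rather, he works with the explicit Clifford bimodule kernel and checks the required orthogonality and generation via a careful analysis of the Clifford multiplication maps and the associated filtration. Your ``resolve the diagonal and read off the pieces'' picture is a useful heuristic, but turning it into a proof requires exactly the bimodule identification you flag as delicate, and that identification \emph{is} the content of Kuznetsov's Section~4. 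If you intend to write this up independently, that is where the real work lies; for the purposes of this paper, a citation suffices.
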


\begin{remark}
The functor $\Phi: \Db(S, \Cliff_0) \to \Db(Q)$ is given by an explicit Fourier--Mukai kernel, 
see~\cite[Section 4]{quadric-fibrations-kuznetsov}.
\end{remark}

Now assume $\pi: Q \to S$ is a generically smooth quadric surface fibration 
over a smooth $k$-variety $S$, with smooth discriminant divisor and simple degeneration.  
As in the discussion at the end of Section~\ref{subsection-twisted-K3-construction}, 
the double cover $f: X \to S$ ramified over $D$ is equipped with an Azumaya 
algebra $\pA$. In terms of this data, we have the following alternative description 
of $\Db(S, \Cliff_0)$, see~\cite[Proposition B.3]{quadric-fibrations-auel} 
or~\cite[Lemma~4.2]{cubics-kuznetsov}.

\begin{lemma}
\label{lemma-cliff-surface-fibration}
In the above situation, there is an isomorphism $f_*\pA \cong \Cliff_0$. 
In particular, pushforward by $f$ induces an equivalence 
$f_{*}: \Db(X, \pA) \xrightarrow{\sim} \Db(S, \Cliff_0)$.
\end{lemma}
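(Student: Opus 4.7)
The plan is to establish the algebra isomorphism $f_*\pA \cong \Cliff_0$ first, and then derive the equivalence of derived categories as a formal consequence. Indeed, because $f: X \to S$ is a finite morphism, pushforward along $f$ gives an exact equivalence between coherent right $\pA$-modules on $X$ and coherent right $f_*\pA$-modules on $S$ (the quasi-inverse being $f^{-1}(-) \otimes_{f^{-1}f_*\pA} \pA$, using that $f$ is affine and $\pA$ is a sheaf of algebras on the ringed space $X$). So the whole content of the lemma is the algebra isomorphism.

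My first step is to pin down the center $\cZ$ of $\Cliff_0$. For a rank $n = 4$ quadratic form on $\cE$ with values in $\cL$, the filtration on $\Cliff_0$ mentioned before Theorem~\ref{theorem-quadric-fibration-sod} has associated graded pieces $\cO_S$, $\wedge^2\cE \otimes \cL^{-1}$, and $\det(\cE) \otimes \cL^{-2}$. A direct local computation (pick a local frame and use the Clifford relations) shows that the center $\cZ$ is a rank $2$ commutative $\cO_S$-subalgebra, generated by $1$ and the ``volume element'' $\omega$ in the top piece $\det(\cE) \otimes \cL^{-2}$ satisfying $\omega^2 = d$, where $d \in \Gamma(S, \det(\cE^*)^2 \otimes \cL^4)$ is the discriminant section of $q:\cE \to \cE^*\otimes \cL$. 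But this is exactly the data defining the double cover $f:X\to S$ branched along $D = (d = 0)$, so $\cZ \cong f_*\cO_X$ canonically.

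Second, I want to check that $\Cliff_0$ is an Azumaya algebra of degree $2$ over $\cZ$, which amounts to a fiberwise verification. Over a point where $q$ has full rank, the classical structure theorem for even Clifford algebras of a split $4$-dimensional quadratic form gives $\Cliff_0 \cong M_2 \oplus M_2$ over $\cZ = k \oplus k$; over a corank-$1$ point the simple degeneration hypothesis forces $\Cliff_0 \cong M_2$ over the ramified quadratic extension matching the local ring of $X$. Hence $\Cliff_0$ is globally Azumaya of degree $2$ over $\cZ$, so by the standard correspondence $\Cliff_0 \cong f_*\pA'$ for a uniquely determined Azumaya algebra $\pA'$ of degree $2$ on $X$.

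The last and hardest step is to identify $\pA' \cong \pA$. Both are Azumaya algebras of degree $2$ on the same K3-type variety $X$, so each is determined by its associated Brauer--Severi $\bP^1$-bundle, and it suffices to produce an isomorphism of these bundles over $X$. By construction $\pA$ is the one whose Brauer--Severi bundle is $g: \pF \to X$ from Stein factorization. The Brauer--Severi bundle of $\pA'$ can be identified with $\pF$ via the classical spinor construction: on each smooth quadric surface fiber of $\pi$, the two rulings of lines are in canonical bijection with the two minimal right ideals of $\Cliff_0$ (equivalently, the two spinor bundles), and this correspondence degenerates correctly over corank-$1$ fibers to identify the single ruling with the unique minimal right ideal over the ramified point of $X$. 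Globalizing this bijection in families yields a canonical isomorphism of Brauer--Severi bundles over $X$, hence $\pA' \cong \pA$. This comparison of two \emph{a priori} different constructions of degree-$2$ Azumaya algebras is the main obstacle; everything else is a structural computation with Clifford algebras.
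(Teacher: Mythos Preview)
The paper does not give its own proof of this lemma; it simply cites \cite[Proposition~B.3]{quadric-fibrations-auel} and \cite[Lemma~4.2]{cubics-kuznetsov}. Your sketch is correct and follows exactly the strategy of those references: identify the center of $\Cliff_0$ with $f_*\cO_X$ via the discriminant, verify $\Cliff_0$ is Azumaya over its center (the delicate point being the corank-$1$ fibers, where one must check Azumaya-ness over the non-reduced local ring of $X$ at a ramification point, not just over the residue field), and then match the resulting Azumaya algebra on $X$ with $\pA$ via the lines/spinor correspondence. One small imprecision: your ``fiberwise verification'' at a discriminant point should really be phrased as a check over the local ring of $X$ (equivalently, over the center $\cZ \otimes_{\cO_{S,s}} k(s) \cong k[\epsilon]/\epsilon^2$), since Azumaya-ness over a non-reduced base is not detected by the reduced fiber alone; but this is exactly what the cited references carry out, and your outline is otherwise on target.
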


\subsection{Derived equivalence}
\label{section-derived-equivalence}

Let $\pi: Y \to \bP(V_1) \times \bP(V_2)$ be as in Section~\ref{subsection-twisted-K3-construction}. 
Assume the discriminant divisors $D_i$ of the quadric fibrations $\pi_i: Y \to \bP(V_i)$ are smooth, 
so that we get associated twisted K3 surfaces $(X_i, \pA_i)$. 
Let $\Cliff_{0,i}$ denote the even Clifford algebra of the quadric fibration ${\pi_i: Y \to \bP(V_i)}$. 
Then Lemma~\ref{lemma-cliff-surface-fibration} gives an equivalence ${f_{i*}: \Db(X_i, \pA_i) \xrightarrow{\sim} \Db(\bP(V_i), \Cliff_{0,i})}$. 
Finally, let ${\Phi_i: \Db(\bP(V_i), \Cliff_{0,i}) \to \Db(Y)}$ be the fully faithful functor from 
Theorem~\ref{theorem-quadric-fibration-sod}. 
In this setup, we prove the following result.

\begin{theorem}
\label{theorem-derived-equivalence}
Assume $D_1$ and $D_2$ are smooth. 
Then there is an equivalence 
\begin{equation*}
\Db(X_1, \pA_1) \simeq \Db(X_2, \pA_2)
\end{equation*}
given by the composition
\begin{equation*}
\label{equivalence-cliff}
f_{2*}^{-1} \circ \Phi_2^* \circ \rR_{\cO_Y(H_2)} \circ \rL_{\cO_Y(H_1)} \circ \Phi_1 \circ f_{1*}: 
\Db(X_1, \pA_1) \to \Db(X_2, \pA_2),
\end{equation*}
where 
\begin{itemize}
\item $\rL_{\cO_Y(H_1)}$ is the left mutation functor through $\langle \cO_Y(H_1) \rangle \subset \Db(Y)$, 
\item $\rR_{\cO_Y(H_2)}$ is the right mutation functor through $\langle \cO_Y(H_2) \rangle \subset \Db(Y)$, 
\item $\Phi_2^*$ is the left adjoint of $\Phi_2$, 
\item $f_{2*}^{-1}$ is the inverse of the equivalence 
$f_{2*}: \Db(X_2, \pA_2) \xrightarrow{\sim} \Db(\bP(V_2), \Cliff_{0,2})$.
\end{itemize}
\end{theorem}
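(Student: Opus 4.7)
The plan is to reduce Theorem~\ref{theorem-derived-equivalence} to the assertion that the composite $M := \rR_{\cO_Y(H_2)} \circ \rL_{\cO_Y(H_1)}$, viewed as an endofunctor of $\Db(Y)$, restricts to an equivalence of the subcategories $\cK_i := \Phi_i(\Db(\bP(V_i), \Cliff_{0,i})) \subset \Db(Y)$ for $i = 1, 2$. Granting this, the theorem follows by pre-composing $M|_{\cK_1}$ with the equivalence $\Phi_1 \circ f_{1*} : \Db(X_1, \pA_1) \xrightarrow{\sim} \cK_1$ (from Lemma~\ref{lemma-cliff-surface-fibration} and the fact that $\Phi_1$ is fully faithful) and post-composing with its inverse $f_{2*}^{-1} \circ \Phi_2^*|_{\cK_2}$ on the $\cK_2$ side, using that $\Phi_2^* \circ \Phi_2 \cong \id$ since $\Phi_2$ is fully faithful.

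To establish the key claim, I would first write down both Kuznetsov SODs of $\Db(Y)$ explicitly. By Lemma~\ref{lemma-quadric-fibration}, $j_i^*\cO_{\bP(\cE_i)}(1) = \cO_Y(H_{3-i})$, so Theorem~\ref{theorem-quadric-fibration-sod} combined with Beilinson's decomposition of $\Db(\bP^2)$ (Example~\ref{example-beilinson}) gives two SODs of $\Db(Y)$ consisting of $\cK_i$ plus six line bundles $\cO_Y(aH_1 + bH_2)$: for $i = 1$ with exponents in $\{0,1,2\} \times \{1,2\}$, and for $i=2$ with exponents in $\{1,2\} \times \{0,1,2\}$. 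The symmetric difference of the two collections of line bundles is precisely $\{\cO_Y(H_2), \cO_Y(2H_2)\}$ against $\{\cO_Y(H_1), \cO_Y(2H_1)\}$. The heart of the argument is then to transform the first SOD into the second by mutations engineered so that the cumulative effect on the distinguished Kuznetsov component is precisely $M$. Using Lemma~\ref{lemma-mutations} together with the Serre-functor identity of Lemma~\ref{lemma-mutation-serre-functor} (applied with $\omega_Y = \cO_Y(-2H_1-2H_2)$, obtained from the adjunction formula for the double cover $\pi : Y \to \bP(V_1) \times \bP(V_2)$ branched in $|2H_1 + 2H_2|$), one passes $\cK_1$ across $\cO_Y(H_1)$ via $\rL_{\cO_Y(H_1)}$ and then across $\cO_Y(H_2)$ via $\rR_{\cO_Y(H_2)}$, simultaneously rearranging the remaining line bundles into the configuration required by the $\pi_2$-SOD. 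By uniqueness of the complementary admissible subcategory in a semiorthogonal decomposition, the image must then coincide with $\cK_2$.

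The main obstacle is verifying that the mutations really can be carried out as prescribed: at each step one must check the semiorthogonality conditions required by Lemma~\ref{lemma-mutations}, and ultimately show that the six line bundles in the transformed SOD match, in the correct order, those appearing in the Kuznetsov SOD for $\pi_2$. These verifications reduce, via adjunction for $\pi_i$, the projection formula $\pi_*\cO_Y = \cO \oplus \cO(-H_1 - H_2)$, and the K\"unneth formula on $\bP^2 \times \bP^2$, to a finite list of standard line-bundle cohomology vanishings of the form $H^*(\bP^2, \cO(m)) = 0$ for $m \in \{-1, -2\}$. The bookkeeping is delicate but entirely mechanical once the setup is fixed.
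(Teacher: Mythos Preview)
Your proposal is correct and follows the same strategy as the paper: reduce to showing that $\rR_{\cO_Y(H_2)}\circ\rL_{\cO_Y(H_1)}$ carries $\cK_1$ onto $\cK_2$, by expanding both Kuznetsov decompositions via Beilinson, using the Serre-functor identity with $\omega_Y=\cO_Y(-2H_1-2H_2)$, and then identifying the Kuznetsov components as the orthogonal to a common exceptional collection.

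Two small points where the paper's execution differs from your sketch. First, the paper does not use your ``untwisted'' Beilinson blocks: for the second copy of $\Db(\bP(V_i))$ it uses $\langle\cO(H_i),\cO(2H_i),\cO(3H_i)\rangle$, so that the rightmost exceptional object in the $\pi_1$-SOD is $\cO_Y(3H_1+2H_2)$, whose Serre mutation to the far left is exactly $\cO_Y(H_1)$. With your choice the object $\cO_Y(H_1)$ never appears in the $\pi_1$-SOD, so the step ``pass $\cK_1$ across $\cO_Y(H_1)$'' is not immediately available and would require extra mutations you have not specified. Second, rather than transforming one SOD into the other, the paper mutates \emph{each} SOD once (a single Serre mutation followed by a single left mutation of $\cC_i$) to reach a common exceptional collection, reads off $\rL_{\cO(H_1)}\cC_1=\rL_{\cO(H_2)}\cC_2$ as the right orthogonal, and then applies $\rR_{\cO(H_2)}$. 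This symmetric version avoids the longer rearrangement you allude to and makes the ``bookkeeping'' essentially trivial.
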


The theorem is an immediate consequence of the following proposition. 
We note that the proposition holds without assuming smoothness of the 
discriminant divisors $D_i$. 

\begin{proposition}
\label{proposition-cliff-equivalence}
There is an equivalence 
\begin{equation*}
\Db(\bP(V_1), \Cliff_{0,1}) \simeq \Db(\bP(V_2), \Cliff_{0,2})
\end{equation*}
given by the composition 
\begin{equation*}
\Phi_2^* \circ \rR_{\cO_Y(H_2)} \circ \rL_{\cO_Y(H_1)} \circ \Phi_1: 
\Db(\bP(V_1), \Cliff_{0,1}) \to \Db(\bP(V_2), \Cliff_{0,2}).
\end{equation*}
\end{proposition}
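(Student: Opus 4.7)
The plan is to prove Proposition~\ref{proposition-cliff-equivalence} by comparing the two semiorthogonal decompositions of $\Db(Y)$ arising from the quadric fibration structures $\pi_1,\pi_2$ and relating their Kuznetsov components by mutations. Applying Theorem~\ref{theorem-quadric-fibration-sod} to each $\pi_i$ and using Lemma~\ref{lemma-quadric-fibration} (which identifies $j_i^*\cO_{\bP(\cE_i)}(1)$ with $\cO_Y(H_{3-i})$) together with Beilinson's decomposition $\Db(\bP(V_i))=\langle\cO,\cO(H_i),\cO(2H_i)\rangle$ from Example~\ref{example-beilinson}, I obtain the two SODs
$$\Db(Y)=\langle\cA_1,\cO_Y(H_2),\cO_Y(H_1+H_2),\cO_Y(2H_1+H_2),\cO_Y(2H_2),\cO_Y(H_1+2H_2),\cO_Y(2H_1+2H_2)\rangle,$$
$$\Db(Y)=\langle\cA_2,\cO_Y(H_1),\cO_Y(H_1+H_2),\cO_Y(H_1+2H_2),\cO_Y(2H_1),\cO_Y(2H_1+H_2),\cO_Y(2H_1+2H_2)\rangle,$$
where $\cA_i=\Phi_i(\Db(\bP(V_i),\Cliff_{0,i}))$. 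By adjunction for the $(2,2)$-branched double cover $\pi:Y\to\bP(V_1)\times\bP(V_2)$, the canonical bundle is $\omega_Y=\cO_Y(-2H_1-2H_2)$. Notice that the two SODs share the four line bundles $\cO_Y(H_1+H_2),\cO_Y(2H_1+H_2),\cO_Y(H_1+2H_2),\cO_Y(2H_1+2H_2)$ and differ only in the triples $\{\cA_1,\cO_Y(H_2),\cO_Y(2H_2)\}$ versus $\{\cA_2,\cO_Y(H_1),\cO_Y(2H_1)\}$.

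The heart of the proof is to construct an explicit chain of mutations that transforms the first SOD into the second, with net effect on $\cA_1$ given precisely by $\rR_{\cO_Y(H_2)}\circ\rL_{\cO_Y(H_1)}$. Using Lemma~\ref{lemma-mutation-serre-functor} to rotate components across the full decomposition (each rotation twisting by $\omega_Y$) together with Lemma~\ref{lemma-mutations} to commute adjacent components, I would maneuver the first SOD into an intermediate form adapted to the two specified mutations. Concretely, after a sequence of such rotations the exceptional objects $\cO_Y(H_1)$ and $\cO_Y(H_2)$ become available as components placed so that $\rL_{\cO_Y(H_1)}$ swaps $\cA_1$ past $\cO_Y(H_1)$ and $\rR_{\cO_Y(H_2)}$ then swaps the result past $\cO_Y(H_2)$; a final reordering via Lemma~\ref{lemma-mutations} exhibits the resulting SOD as the second of the two above. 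Consequently, $\rR_{\cO_Y(H_2)}\circ\rL_{\cO_Y(H_1)}$ restricts to an equivalence $\cA_1\xrightarrow{\sim}\cA_2$ of subcategories of $\Db(Y)$.

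To conclude, since $\Phi_i$ is fully faithful with essential image $\cA_i$ and $\Phi_2^*$ restricts to the inverse of $\Phi_2$ on $\cA_2$, the composition $\Phi_2^*\circ\rR_{\cO_Y(H_2)}\circ\rL_{\cO_Y(H_1)}\circ\Phi_1$ yields the desired $k$-linear equivalence $\Db(\bP(V_1),\Cliff_{0,1})\xrightarrow{\sim}\Db(\bP(V_2),\Cliff_{0,2})$. The main obstacle is identifying the correct chain of Serre-functor rotations and adjacency swaps placing $\cO_Y(H_1)$ and $\cO_Y(H_2)$ adjacent to $\cA_1$ in the required positions: this is a delicate but elementary bookkeeping of $\omega_Y=\cO_Y(-2H_1-2H_2)$ twists on each component, and checking at each step that the named mutations rather than ancillary ones ultimately suffice. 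Once the correct intermediate SOD is in place, the remaining steps reduce to formal applications of Lemmas~\ref{lemma-mutation-functor-properties} and~\ref{lemma-mutations}.
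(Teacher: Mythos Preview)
Your strategy---compare the two quadric-fibration semiorthogonal decompositions of $\Db(Y)$ and relate the Kuznetsov components by mutations---is exactly the paper's. However, the step you label ``the main obstacle'' and defer as ``delicate but elementary bookkeeping'' is the entire content of the proof, and you have not done it. With your particular choice of SODs this bookkeeping is also harder than it needs to be.

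The paper avoids a long chain of rotations by a small trick you miss. Instead of inserting the standard Beilinson collection $\langle\cO,\cO(H_i),\cO(2H_i)\rangle$ into \emph{both} copies of $\pi_i^*\Db(\bP(V_i))$, it uses the shifted collection $\langle\cO(H_i),\cO(2H_i),\cO(3H_i)\rangle$ for the second copy. The first SOD then ends with $\cO_Y(3H_1+2H_2)$, and a \emph{single} application of Lemma~\ref{lemma-mutation-serre-functor} sends this object to the far left as $\cO_Y(3H_1+2H_2)\otimes\omega_Y=\cO_Y(H_1)$, precisely the object needed. One left mutation of $\cA_1$ through $\cO_Y(H_1)$ gives an intermediate SOD; the symmetric argument on the second side produces $\cO_Y(H_2)$ on the far left and $\rL_{\cO_Y(H_2)}\cA_2$. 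At this point the six exceptional line bundles in the two intermediate SODs coincide as a set, so $\rL_{\cO_Y(H_1)}\cA_1=\rL_{\cO_Y(H_2)}\cA_2$ as right orthogonals to the same subcategory, and applying $\rR_{\cO_Y(H_2)}$ (with Lemma~\ref{lemma-mutation-functor-properties}) finishes.

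By contrast, none of the six line bundles in your SOD for $\pi_1$ becomes $\cO_Y(H_1)$ after tensoring with $\omega_Y$, so the single-rotation shortcut is unavailable and you would genuinely need a longer sequence of moves that you have not specified. The fix is either to carry out that longer sequence explicitly, or---simpler---to adopt the paper's shifted Beilinson collection for the second block from the start.
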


\begin{proof}
Set $\cC_i = \Phi_i(\Db(\bP(V_i), \Cliff_{0,i})) \subset \Db(Y)$.
Theorem~\ref{theorem-quadric-fibration-sod} gives semiorthogonal decompositions
\begin{align*}
\Db(Y) & = \langle \cC_1, \pi_1^*\Db(\bP(V_1)) \otimes \cO(H_2), \pi_1^*\Db(\bP(V_1)) \otimes \cO(2H_2) \rangle, \\
\Db(Y) & = \langle \cC_2, \pi_2^*\Db(\bP(V_2)) \otimes \cO(H_1), \pi_2^*\Db(\bP(V_2)) \otimes \cO (2H_1) \rangle.
\end{align*}
Recall Beilinson's decomposition $\Db(\bP(V_i)) = \langle \cO, \cO(H_i), \cO(2H_i) \rangle$ 
(see Example~\ref{example-beilinson}). 
In each of the above decompositions of $\Db(Y)$, we replace the first copy of 
$\Db(\bP(V_i))$ by Beilinson's decomposition and the second copy by the same decomposition 
twisted by $\cO(H_i)$:
\begin{align}
\label{proof-sod-1}
\phantom{\Db(Y)}
& \begin{aligned}
\mathllap{\Db(Y)} =  \langle \cC_1, \, & \cO(H_2), \cO(H_1+H_2), \cO(2H_1+H_2), \\
& \cO(H_1+2H_2), \cO(2H_1+2H_2), \cO(3H_1+2H_2) \rangle,
\end{aligned} \\
\label{proof-sod-2}
& \begin{aligned}
\mathllap{\Db(Y)} = \langle \cC_2, \, & \cO(H_1), \cO(H_1+H_2), \cO(H_1+2H_2) , \\
& \cO(2H_1+H_2), \cO(2H_1+2H_2), \cO(2H_1+3H_2) \rangle.
\end{aligned}
\end{align}
We perform a sequence of mutations that identifies the categories 
generated by the exceptional objects in~\eqref{proof-sod-1} and~\eqref{proof-sod-2}.

First consider~\eqref{proof-sod-1}. 
Mutate $\cO(3H_1+2H_2)$ to the far left of the decomposition. 
Note that $Y$ is smooth with canonical class $K_Y = -2H_1-2H_2$, 
so by Lemma~\ref{lemma-mutation-serre-functor} the result of the mutation is
\begin{align*}
\Db(Y)  = \langle \cO(H_1), \cC_1,  \, & \cO(H_2), \cO(H_1+H_2), \cO(2H_1+H_2), \\
& \cO(H_1+2H_2), \cO(2H_1+2H_2) \rangle.
\end{align*}
Left mutating $\cC_1$ through $\cO(H_1)$ then gives a decomposition
\begin{equation}
\begin{split}
\label{proof-sod-1-new}
\Db(Y)  = \langle \rL_{\cO(H_1)}\cC_1, \cO(H_1), \, & \cO(H_2), \cO(H_1+H_2), \cO(2H_1+H_2), \\
& \cO(H_1+2H_2), \cO(2H_1+2H_2) \rangle.
\end{split}
\end{equation}
By the same argument, we obtain from~\eqref{proof-sod-2} a similar decomposition
\begin{equation}
\label{proof-sod-2-new}
\begin{split}
\Db(Y) = \langle \rL_{\cO(H_2)}\cC_2, \cO(H_2), \, & \cO(H_1), \cO(H_1+H_2), \cO(H_1+2H_2), \\
& \cO(2H_1+H_2), \cO(2H_1+2H_2) \rangle. 
\end{split}
\end{equation}
Up to permutation, the exceptional objects in the decompositions~\eqref{proof-sod-1-new} and 
~\eqref{proof-sod-2-new} agree, hence they generate the same subcategory of $\Db(Y)$. 
It follows that $\rL_{\cO(H_1)}\cC_1$ and $\rL_{\cO(H_2)}\cC_2$ coincide, 
as both are the right orthogonal to the same subcategory.
Now the proposition follows since 
$\rR_{\cO(H_2)} \circ \rL_{\cO(H_2)} \cong \id$ 
on ${^{\perp}}\langle \cO(H_2) \rangle$ 
by Lemma~\ref{lemma-mutation-functor-properties}.
\end{proof}

\begin{remark}
\label{remark-picard-number}
The equivalence $\Db(X_1, \alpha_1) \simeq \Db(X_2, \alpha_2)$ of 
Theorem~\ref{theorem-derived-equivalence} implies other relations between $X_1$ and $X_2$. 
For instance, over $k = \bC$ it implies the Picard numbers of $X_1$ and $X_2$ agree. 
Indeed, it suffices to note that the equivalence induces an isomorphism of twisted 
transcendental lattices $\rT(X_1, \alpha_1) \cong \rT(X_2, \alpha_2)$, whose 
ranks are the same as the usual transcendental lattices (see~\cite{huybrechts-stellari}).
\end{remark}


\section{Equations for the twisted K3 surfaces and local invariants}
\label{section-local-invariants}
Let $k$ be a number field.
Then for any place $v$ of $k$, class field theory provides an 
embedding $\inv_v: \Br(k_v) \to \bQ/\bZ$ (which is an isomorphism for nonarchimedian $v$). 
Now let $X$ be a smooth, projective, geometrically integral variety over $k$. 
Any subset $S \subset \Br(X)$ cuts out a subset $X(\bA_k)^S \subset X(\bA_{k})$ of the adelic points of $X$, given by
\begin{equation*}
X(\bA_{k})^{S} = \left \{ (x_v) \in X(\bA_k) ~ | ~ \sum_{v} \inv_v \, \alpha(x_v) = 0  \text{ for all } \alpha \in S \right\}.
\end{equation*}
For fixed $(x_v) \in X(\bA_k)$ and $\alpha \in \Br(X)$, the evaluation $\alpha(x_v) = 0$ for all but finitely many $v$, so the above sum is well-defined. 
Moreover, class field theory gives inclusions 
\begin{equation*}
X(k) \subset X(\bA_k)^S \subset X(\bA_k).
\end{equation*}
Hence, if $X(\bA_k)^S$ is empty for some $S$, then $X$ has no $k$-points. 
We note that if $X(\bA_k)^S$ is empty but $X(\bA_k)$ is not, then $S$ is said to give a 
\emph{Brauer--Manin obstruction} to the Hasse principle. 
See~\cite[5.2]{Sko} for more details.

In this section, we describe conditions on the $(2,2)$ divisor $Z \subset \bP(V_1) \times \bP(V_2)$ 
from Section~\ref{subsection-twisted-K3-construction}, which allow us to control the 
local invariants $\inv_{v} \, \alpha_1(x_v)$ for any $v$-adic point $x_v \in X_1(k_v)$. 
In the end, we will see that if $k = \bQ$ and enough conditions are met, then 
\begin{equation*}
\inv_v \, \alpha_1(x_v) = 
\begin{cases}
0 & \text{if $v$ is finite}, \\
\frac{1}{2} & \text{if $v$ is real},
\end{cases}
\end{equation*}
for all $x_v \in X_1(k_v)$. 
Hence $X_1(\bA_k)^{\alpha_1}$ is empty and $X_1$ has no $k$-points. 
Our discussion follows~\cite{hva} very closely, and differs only in the treatment of the $2$-adic place 
(see Lemma~\ref{2-adic}). 

\subsection{Equations for the twisted K3 surfaces} 
Let the notation be as in Section~\ref{subsection-twisted-K3-construction} 
(in particular $k$ may be any field of characteristic not equal to $2$).

Choose coordinates $x_0, x_1, x_2$ on $\bP(V_1)$ and $y_0,y_1,y_2$ on $\bP(V_2)$. 
The equation defining $Z$ can be written as 
\begin{equation}
\label{A-F}
\begin{aligned}
& A(x_0, x_1, x_2)y_0^2 + B(x_0, x_1, x_2)y_0y_1 + C(x_0,x_1,x_2)y_0y_2 \, + \\
& D(x_0, x_1, x_2)y_1^2 + E(x_0, x_1, x_2)y_1y_2 + F(x_0,x_1,x_2)y_2^2, 
\end{aligned} 
\end{equation}
where $A, \dots, F$ are degree $2$ homogeneous polynomials in the $x_i$, 
or as 
\begin{equation}
\label{A-F-prime}
\begin{aligned}
& A'(y_0, y_1, y_2)x_0^2 + B'(y_0, y_1, y_2)x_0x_1 + C'(y_0,y_1,y_2)x_0x_2 \, + \\
& D'(y_0, y_1, y_2)x_1^2 + E'(y_0, y_1, y_2)x_1x_2 + F'(y_0,y_1,y_2)x_2^2.
\end{aligned}
\end{equation}
where $A', \dots, F'$ are degree $2$ homogeneous polynomials in the $y_i$. 
The first or second expression is useful depending on whether we regard $Y$ 
as a quadric fibration over $\bP(V_1)$ or $\bP(V_2)$. 
The following lemma summarizes the computations of~\cite[Section 3]{hva}.

\begin{lemma}
\label{lemma-equation}
$(1)$ Let
\begin{equation*}
M  = 
\begin{pmatrix}
2A & B & C \\
B & 2D & E \\
C & E & 2F 
\end{pmatrix} 
\end{equation*} 
Then the discriminant curve $D_1 \subset \bP(V_1)$ is defined by 
$\det(M) = 0$, 
and $X_1$ is defined  
in the weighted projective space $\bP(1,1,1,3)$ with coordinates $x_0,x_1,x_2,w$ by
\begin{equation*}
w^2 = -\frac{1}{2} \det(M).
\end{equation*}
The analogous statements hold for $D_2 \subset \bP(V_2)$ and $X_2$ with $M$ 
replaced by 
\begin{equation*}
M'  = 
\begin{pmatrix}
2A' & B' & C' \\
B' & 2D' & E' \\
C' & E' & 2F'
\end{pmatrix}.
\end{equation*}

\medskip \noindent
$(2)$ Define 
\begin{equation*}
M_A = 4DF - E^2, \quad M_D = 4AF - C^2, \quad M_F = 4AD - B^2.
\end{equation*}
Assume $D_1 \subset \bP(V_1)$ is smooth, so that we have a twisted K3 surface 
$(X_1, \alpha_1)$. 
Then the image of $\alpha_1$ under the injection $\Br(X_1) \to \Br(k(X_1))$  
(where $k(X_1)$ is the function field of $X_1$) can be represented by any of the following 
Hilbert symbols: 
\begin{equation*}
(-M_F, A), ~ (-M_D, A), ~ (-M_F, D), ~ (-M_A, D), ~ (-M_D, F), ~ (-M_A, F).
\end{equation*}
Defining $M'_{A'}, M'_{D'}, M'_{F'}$ similarly, the analogous statement holds for $\alpha_2 \in \Br(X_2)$. 
\end{lemma}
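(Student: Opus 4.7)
The plan is to compute the defining equation of $X_1$ and the Brauer class $\alpha_1$ directly from the matrix formulation of the quadric fibration $\pi_1$.

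For part (1), I would start from the explicit form of $q$ cutting out $Y$ inside $\bP(\cE_1)$:
\begin{equation*}
q = Ay_0^2 + By_0y_1 + Cy_0y_2 + Dy_1^2 + Ey_1y_2 + Fy_2^2 - w^2 = 0.
\end{equation*}
The $4 \times 4$ symmetric matrix of $q$ is block diagonal with blocks $M/2$ and $-1$, so its determinant equals $-\det(M)/8$, giving the discriminant divisor $D_1 = \{\det(M) = 0\} \subset \bP(V_1)$. Next I would write the double cover $X_1 \to \bP(V_1)$ branched along $D_1$ as $w^2 = s$ in $\bP(1,1,1,3)$ and pin down the section $s$: the canonical normalization coming from the $-1/8$ factor in the discriminant (together with a choice of scaling on $w$) gives $s = -\frac{1}{2}\det(M)$, as stated. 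The computation for $X_2$ is identical after replacing $M$ with $M'$.

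For part (2), I would invoke Lemma~\ref{lemma-cliff-surface-fibration} to identify $\alpha_1$ at the generic point of $X_1$ with the class of the even Clifford algebra of $q$. The approach is to diagonalize $q$ over $k(\bP(V_1))$ by completing the square first in $y_0$ and then in $y_1$. The key algebraic identity here is
\begin{equation*}
(E')^2 - 4D'F' = -\det(M)/(2A),
\end{equation*}
where $D', E', F'$ are the coefficients of the binary form in $y_1, y_2$ after the first completion. This yields $q \cong \langle A,\, M_F/(4A),\, \det(M)/(2M_F),\, -1 \rangle$. Passing to $k(X_1) = k(\bP(V_1))(\sqrt{-\det(M)/2})$, the class of $\det(M)$ modulo squares becomes $-2$, so the form simplifies to $\langle A, AM_F, -M_F, -1 \rangle$. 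Applying the Clifford invariant formula for a diagonal four-form with square discriminant, and simplifying via the Hilbert symbol identities $(u,-u) = 1$ and $(u,u) = (u,-1)$, then produces the class $(-M_F, A) \in \Br(k(X_1))$.

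The remaining five Hilbert symbols in the lemma would be obtained by running the same diagonalization for the other five orderings of $y_0, y_1, y_2$: each ordering produces a diagonal form whose first two coefficients are a diagonal entry of $M$ (one of $A, D, F$) together with the corresponding $2 \times 2$ principal minor cofactor (one of $M_F, M_D, M_A$), and the Clifford formula delivers the matching Hilbert symbol. The main obstacle is the careful sign and normalization bookkeeping, in particular the verification that all six symbols coincide in $\Br(k(X_1))$ after using the relation $\det(M) \equiv -2 \pmod{\mathrm{squares}}$ in $k(X_1)$. These calculations are laid out in detail in~\cite[Section 3]{hva}, so the proof reduces to citing or reproducing their analysis.
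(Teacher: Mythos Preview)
Your proposal is correct and in fact goes further than the paper does: the paper gives no proof at all for this lemma, merely stating beforehand that it ``summarizes the computations of~\cite[Section 3]{hva}.'' Your sketch of the diagonalization and the Clifford-invariant calculation is accurate (in particular the identity $(E')^2 - 4D'F' = -\det(M)/(2A)$ and the identification $q \cong -\langle\langle -M_F, A\rangle\rangle$ over $k(X_1)$), and you likewise end by deferring to~\cite[Section 3]{hva}, so the two treatments coincide.
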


From now on, we assume $D_1 \subset \bP(V_1)$ is smooth, so that $(X_1, \alpha_1)$ is defined.

\subsection{Conditions controlling the local invariants}
\label{subsection-conditions}
The following result holds by Proposition 4.1 and Lemma 4.2 of~\cite{hva}. 
It allows us to control local invariants at finite places of bad reduction, assuming the place is not $2$-adic 
and the singularities are mild.

\begin{proposition}
\label{badred}
Let $F$ be a finite extension of $\bQ_p$ for $p \neq 2$, and denote by $\cO_F$ the 
ring of integers of $F$. 
Let $X$ be a K3 surface over $F$. 
Let $\cX \to \Spec(\cO_F)$ be a flat, proper morphism from a regular scheme $\cX$, 
with generic fiber $\cX_{\eta} \cong X$. 
Assume the singular locus of the geometric special fiber $\cX_{\overline{s}}$ 
consists of less than $8$ points, each of which is an ordinary double point. 
If $X(F) \neq \emptyset$, then for any $2$-power torsion Brauer class 
$\alpha \in \Br(X)[2^{\infty}]$, the map $X(F) \to \Br(F)$ given by evaluation of $\alpha$ 
is constant. 
In particular, $\alpha(x) = 0$ for all $x \in X(F)$ if this holds for a single $x$. 
\end{proposition}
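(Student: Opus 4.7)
The plan is to use the regular model $\cX$ to reduce the evaluation of $\alpha$ at $F$-points to a residue computation in $\rH^1(k_F, \bQ/\bZ)$, where $k_F$ is the residue field of $\cO_F$, and then exploit the K3-like geometry of the special fiber to show this residue is the same for every $F$-point. By the valuative criterion of properness, each $x \in X(F)$ extends uniquely to an $\cO_F$-section $\tilde x : \Spec \cO_F \to \cX$, so $\alpha(x) = \tilde x^{*}\alpha \in \Br(F)$. Since $\cO_F$ is a complete DVR with finite residue field, Artin's residue sequence gives $\Br(\cO_F) = 0$ together with a residue isomorphism $\partial_F : \Br(F) \xrightarrow{\sim} \rH^1(k_F, \bQ/\bZ)$. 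Thus showing $\alpha(x_1) = \alpha(x_2)$ for any pair $x_1, x_2 \in X(F)$ reduces to showing that the residues $\partial_F(\alpha(x_i))$ agree.

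Next, to globalize this residue computation, I would invoke purity for the Brauer group. The non-smooth locus $\Sigma$ of $\cX \to \Spec \cO_F$ is the finite ODP set in $\cX_{\bar s}$, of codimension $\geq 2$ in the regular scheme $\cX$, so Gabber's purity gives $\Br(\cX) \cong \Br(\cX \setminus \Sigma)$. Writing $\cX^{\circ} = \cX \setminus \Sigma$, the Gysin sequence for the smooth part of the special fiber inside $\cX^{\circ}$, restricted to $2$-primary torsion (which makes sense as $p \neq 2$ ensures the coefficient sheaves are invertible on $\cO_F$), furnishes an exact sequence
\begin{equation*}
0 \to \Br(\cX^{\circ})[2^{\infty}] \to \Br(X)[2^{\infty}] \xrightarrow{\partial} \rH^1(\cX_s^{\circ}, \bQ/\bZ)[2^{\infty}].
\end{equation*}
For an $\cO_F$-section $\tilde x$ whose closed-fiber image $\bar x$ lies in $\cX_s^{\circ}$, functoriality of residues identifies $\partial_F(\alpha(x))$ with $\bar x^{*}(\partial \alpha)$. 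Sections meeting $\Sigma$ can be handled by first blowing up the ODPs, producing a regular model with smoother special fiber through which the sections factor via the smooth locus, without altering the residues.

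The geometric heart of the proof is then to show that $\partial \alpha$ is a \emph{constant} class on $\cX_s^{\circ}$, i.e., pulled back from $\rH^1(k_F, \bQ/\bZ)[2^{\infty}]$ along the structure map. By the Hochschild--Serre spectral sequence this reduces to verifying $\rH^1_{\text{\'et}}(\cX_{\bar s}^{\circ}, \bZ/2^n) = 0$ for every $n$. The geometric special fiber $\cX_{\bar s}$ is a projective surface with fewer than $8$ nodes degenerating a K3; its minimal desingularization is a smooth projective simply connected surface (this is precisely where the ``less than $8$'' bound is used, ensuring the resolution is K3-like rather than an exotic surface with nontrivial fundamental group), and removing the finitely many nodes preserves $\pi_1^{\text{ab}}$ since they are of codimension $2$. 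Granting this vanishing, $\bar x^{*}(\partial \alpha)$ is independent of $\bar x$, so $\partial_F(\alpha(x_1)) = \partial_F(\alpha(x_2))$ for all $x_1, x_2 \in X(F)$, and the evaluation map is constant. The main obstacle is establishing the étale $\rH^1$-vanishing for the punctured geometric special fiber under the node count --- this is the essential geometric content of the ``less than $8$'' hypothesis and requires careful analysis of the topology of K3 degenerations with isolated nodal singularities.
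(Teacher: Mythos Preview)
Your overall strategy---reduce via purity and the Gysin/residue sequence to the vanishing of $\rH^1_{\text{\'et}}(\cX_{\bar s}^{\circ},\bZ/2^n)$---is exactly the approach behind the result the paper invokes (the paper gives no proof of its own here, citing Proposition~4.1 and Lemma~4.2 of Hassett and V\'arilly-Alvarado). The residue/purity setup is fine; incidentally, a short local computation using the regularity of $\cX$ shows that no $\cO_F$-section can pass through a node, so the blowup digression is unnecessary.

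The gap is in your argument for the $\rH^1$-vanishing, where the role of ``less than $8$'' is misplaced. The minimal resolution $\widetilde{\cX_{\bar s}}$ is a K3 surface, hence simply connected, \emph{regardless} of the number of nodes: the special fiber inherits trivial dualizing sheaf and vanishing $\rH^1(\cO)$ from the generic fiber, and resolving ordinary double points is crepant. So the bound is not used there. Conversely, your claim that removing the nodes preserves $\pi_1^{\text{ab}}$ ``since they are of codimension $2$'' fails: purity for the \'etale fundamental group requires the ambient scheme to be regular, and $\cX_{\bar s}$ is not regular at a node---the local fundamental group of a surface ordinary double point is $\bZ/2$, so deleting it can and does enlarge $\pi_1$. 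The correct mechanism is that nonzero classes in $\rH^1(\cX_{\bar s}^{\circ},\bZ/2)$ correspond to double covers of the K3 resolution branched along a subset $S$ of the exceptional $(-2)$-curves whose sum is divisible by $2$ in $\Pic$; on a K3 surface such an even set of disjoint nodal curves has at least $8$ elements (Nikulin). \emph{This} is where the hypothesis enters, and with that correction your outline goes through.
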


The next result is~\cite[Lemma 4.4]{hva}. It guarantees that the local invariants of 
$\alpha_1 \in \Br(X_1)$ vanish at finite places of good reduction, away from the prime $2$.  
\begin{lemma}
\label{goodred}
Let $k$ be a number field.
Let $v$ be a finite place of good reduction for $X_1$ which is not $2$-adic. 
Then $\inv_{v} \, \alpha_1(x) = 0$ for all $x \in X_1(k_v)$. 
\end{lemma}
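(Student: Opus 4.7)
The plan is to extend $\alpha_1$ to a smooth proper integral model of $X_1$ and exploit the vanishing of $\Br(\cO_{k_v})$. Since $v$ is a place of good reduction for $X_1$, there exists a smooth proper scheme $\cX_1 \to \Spec(\cO_{k_v})$ whose generic fiber is $X_1$. Moreover, $\alpha_1$ is $2$-torsion: it is represented by the \'{e}tale-locally trivial $\bP^1$-bundle $\pF_1 \to X_1$ from the Stein factorization of the relative Fano variety of lines of $\pi_1$.

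The first step is to lift $\alpha_1$ to a class $\tilde{\alpha}_1 \in \Br(\cX_1)$. Because $v \nmid 2$, smooth and proper base change yields an isomorphism
\[ H^i_{\mathrm{\acute{e}t}}(\cX_1, \mu_2) \xrightarrow{\sim} H^i_{\mathrm{\acute{e}t}}(X_1, \mu_2) \]
for every $i \geq 0$. Combining this with the Kummer sequences on $\cX_1$ and on $X_1$ (together with a comparison of Picard groups in the generic and special fibers), a diagram chase produces the desired lift $\tilde{\alpha}_1 \in \Br(\cX_1)[2]$.

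The second step is then formal. For any $x \in X_1(k_v)$, the valuative criterion of properness applied to $\cX_1 \to \Spec(\cO_{k_v})$ supplies a unique $\cO_{k_v}$-point $\tilde{x}$ extending $x$. Pulling back $\tilde{\alpha}_1$ along $\tilde{x}$ yields a class in $\Br(\cO_{k_v})$, which vanishes since $\cO_{k_v}$ is henselian with finite residue field $\bF_q$ and $\Br(\bF_q) = 0$. Restricting back to the generic point of $\Spec(\cO_{k_v})$ recovers $\alpha_1(x)$, so $\inv_v \alpha_1(x) = 0$.

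The main obstacle is the lifting step: one must verify that $\alpha_1$ actually extends to the integral model. This uses crucially the hypothesis $v \nmid 2$, via smooth and proper base change for $\mu_2$-coefficient \'{e}tale cohomology. At the $2$-adic place this comparison fails in general, which is precisely why Lemma \ref{2-adic} is needed to handle that place separately.
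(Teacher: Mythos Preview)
Your overall strategy---extend $\alpha_1$ to a smooth proper integral model $\cX_1$ over $\cO_{k_v}$ and then use $\Br(\cO_{k_v})=0$---is the right one, and the second step is fine. The gap is in the first step: the displayed isomorphism
\[
H^i_{\mathrm{\acute{e}t}}(\cX_1,\mu_2)\xrightarrow{\ \sim\ }H^i_{\mathrm{\acute{e}t}}(X_1,\mu_2)
\]
does \emph{not} follow from smooth and proper base change, and is in fact false. Smooth proper base change says that $R^qf_*\mu_2$ is locally constant on $\Spec\cO_{k_v}$; together with proper base change over a henselian base this identifies $H^i(\cX_1,\mu_2)$ with $H^i$ of the \emph{special} fiber, not of the generic fiber $X_1$. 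Already for $\cX_1=\Spec\cO_{k_v}$, $X_1=\Spec k_v$ one has $H^2(\cO_{k_v},\mu_2)=0$ but $H^2(k_v,\mu_2)=\Br(k_v)[2]\cong\bZ/2$. In the K3 case, comparing Hochschild--Serre spectral sequences (using $H^1$ of a K3 vanishes) shows that $H^2(\cX_1,\mu_2)\to H^2(X_1,\mu_2)$ is injective with cokernel exactly this $\bZ/2$, generated by the constant classes $\Br(k_v)[2]\to\Br(X_1)[2]$. So your diagram chase can at best show that $\alpha_1$ lifts to $\Br(\cX_1)$ \emph{modulo a constant class}, i.e.\ that $x\mapsto\alpha_1(x)$ is constant on $X_1(k_v)$---not that the constant is zero.

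The paper gives no proof here and simply cites \cite[Lemma~4.4]{hva}. To close the gap one has to use something specific about $\alpha_1$ rather than a purely cohomological argument: $\alpha_1$ is the class of the even Clifford algebra of the quadric surface bundle $\pi_1$, and at a place of good reduction away from $2$ the whole construction (the $(2,2)$ form, the quadric bundle, and hence $\Cliff_0$ and the Azumaya algebra $\pA_1$) spreads out over $\cO_{k_v}$. This yields an Azumaya algebra on $\cX_1$ restricting to $\pA_1$, so $\alpha_1$ itself---not merely its class modulo $\Br(k_v)$---extends, and then your second step finishes the proof.
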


We are left to control the real and $2$-adic invariants of $\alpha_1 \in \Br(X_1)$. 
The following result, which is~\cite[Corollary 4.6]{hva}, 
gives conditions which guarantee $\alpha_1$ is nontrivial at any real point of $X_1$.
\begin{lemma}
\label{real}
Let $k = \bQ$. Assume the polynomials $A, \dots, F$ from~\eqref{A-F}, 
when regarded as quadratic forms, satisfy: 
\begin{enumerate}
\item $A,D,$ and $F$ are negative definite,
\item $B,C,$ and $E$ are positive definite. 
\end{enumerate}
If $\infty$ denotes the real place, then $\inv_{\infty} \, \alpha_1(x) = 1/2$ for all $x \in X_1(\bR)$. 
\end{lemma}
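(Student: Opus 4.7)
The plan is to evaluate $\alpha_1$ at a real point using the Hilbert symbol representations from Lemma \ref{lemma-equation}(2) and to observe that the sign hypotheses force the result to be nontrivial in $\Br(\bR)$. Recall that $(a,b)_\infty = -1$ (the nontrivial element of $\Br(\bR)$, corresponding to $1/2$ under $\inv_\infty$) precisely when both $a, b < 0$. Since $A, D, F$ are negative definite, at any real projective point $x$ we have $A(x), D(x), F(x) < 0$, so in each of the six symbols $(-M_?, ?)$ listed in Lemma \ref{lemma-equation}(2) the second entry is automatically strictly negative. It therefore suffices to show that $M_A(x) > 0$ for every $x \in X_1(\bR)$, because then $(-M_A, D)$ is a well-defined Hilbert symbol with both entries strictly negative, giving $\inv_\infty \alpha_1(x) = 1/2$.

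To prove this positivity, I would carry out a direct sign analysis using the defining equation $w^2 = -\tfrac{1}{2}\det(M)$ of $X_1$ from Lemma \ref{lemma-equation}(1). Set $a = -A(x),\, d = -D(x),\, f = -F(x)$ and $b = B(x),\, c = C(x),\, e = E(x)$; all six quantities are strictly positive by the definiteness hypotheses. Expanding $\det(M)$ along the first row and substituting yields
\begin{equation*}
w^2 \;=\; 4adf - ae^2 - b^2 f - c^2 d - bce,
\end{equation*}
and the existence of a real value of $w$ forces $4adf \geq ae^2 + b^2 f + c^2 d + bce$. Suppose for contradiction that $M_A(x) = 4DF - E^2 \leq 0$, i.e.\ $e^2 \geq 4df$; multiplying by $a > 0$ gives $ae^2 \geq 4adf$, and combining this with the previous inequality produces
\begin{equation*}
4adf \;\geq\; 4adf + b^2 f + c^2 d + bce,
\end{equation*}
which is impossible since $b, c, d, e, f > 0$. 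Hence $M_A(x) > 0$, and the lemma follows.

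The main obstacle is essentially just the bookkeeping of signs in the expansion of $\det(M)$ and the substitution $a = -A$, etc.; once that is done, the contradiction is immediate. Conceptually, the argument is rigid: the negative/positive-definite pattern of the hypotheses is engineered precisely so that one entry of each Hilbert symbol is automatically negative at any real point, and the $w^2 \geq 0$ constraint cleanly forces the remaining entry to be negative as well.
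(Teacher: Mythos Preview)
Your argument is correct. The sign bookkeeping checks out: expanding $-\tfrac{1}{2}\det(M)$ and substituting $A=-a$, $D=-d$, $F=-f$ indeed gives $w^2 = 4adf - ae^2 - b^2f - c^2d - bce$, and the contradiction you derive from $M_A(x)\le 0$ is valid since $b,c,d,e,f>0$ forces $b^2f+c^2d+bce>0$. The only implicit step worth making explicit is that, because $D(x)\ne 0$ and $M_A(x)\ne 0$, the Hilbert symbol $(-M_A,D)$ is defined at $x$ and hence computes $\alpha_1(x)$ there; this is exactly the kind of specialization the paper uses without comment in the proof of Lemma~\ref{2-adic}.

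As for comparison: the paper does not give its own proof of this lemma but simply cites it as \cite[Corollary~4.6]{hva}. Your self-contained argument, extracting the positivity of $M_A$ directly from the real-point constraint $w^2\ge 0$, is a nice elementary substitute for that reference and fits well with the paper's hands-on treatment of the $2$-adic place in Lemma~\ref{2-adic}.
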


The following lemma improves~\cite[Lemma~4.7]{hva}, giving conditions such that  
$\alpha_1$ is trivial at every $2$-adic point of $X_1$. 

\begin{lemma}
\label{2-adic}
Let $k = \bQ$. Write the polynomials $A, \dots, F \in \bQ[x_0,x_1,x_2]$ from~\eqref{A-F} as
\begin{align*} 
A  & = A_1 x_0^2 + A_2 x_0x_1 + A_3 x_0x_2 + A_4 x_1^2 + A_5x_1x_2 + A_6 x_2^2,
\\ B  & = B_1 x_0^2 + B_2 x_0x_1 + B_3 x_0x_2 + B_4 x_1^2 + B_5x_1x_2 + B_6 x_2^2,
\\ C  & = C_1 x_0^2 + C_2 x_0x_1 + C_3 x_0x_2 + C_4 x_1^2 + C_5x_1x_2 + C_6 x_2^2,
\\ D  & = D_1 x_0^2 + D_2 x_0x_1 + D_3 x_0x_2 + D_4 x_1^2 + D_5x_1x_2 + D_6 x_2^2,
\\ E  & = E_1 x_0^2 + E_2 x_0x_1 + E_3 x_0x_2 + E_4 x_1^2 + E_5x_1x_2 + E_6 x_2^2, 
\\ F  & = F_1 x_0^2 + F_2 x_0x_1 +F_3 x_0x_2 + F_4 x_1^2 + F_5x_1x_2 + F_6 x_2^2.
\end{align*} 
Suppose the coefficients of $A, \dots, F$ satisfy:
\begin{enumerate}
\item The $2$-adic valuation of $A_1,B_1,C_6, D_4, E_4,$ and $F_6$ is $0$.  
\item The $2$-adic valuation of all other coefficients is $>0$.
\end{enumerate} 
Then $\inv_2 \, \alpha_1(x) = 0$ for all $x \in X_1(\bQ_2)$.  
 \end{lemma}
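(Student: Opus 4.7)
The plan is to evaluate $\alpha_1$ at a $2$-adic point $x \in X_1(\bQ_2)$ by computing one of the six Hilbert-symbol representatives of $\alpha_1$ from Lemma~\ref{lemma-equation}(2) and showing it is trivial in $\Br(\bQ_2)[2]$.

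First I would normalize the weighted projective coordinates of $x = (x_0 : x_1 : x_2 : w)$ so that $x_0, x_1, x_2 \in \bZ_2$ with $\min_i v_2(x_i) = 0$. Under the valuation hypotheses, each form reduces mod $2$ to a single diagonal monomial:
\begin{equation*}
A \equiv A_1 x_0^2,\ B \equiv B_1 x_0^2,\ D \equiv D_4 x_1^2,\ E \equiv E_4 x_1^2,\ C \equiv C_6 x_2^2,\ F \equiv F_6 x_2^2 \pmod 2.
\end{equation*}
Hence $A(x)$ and $B(x)$ are $2$-adic units iff $v_2(x_0) = 0$, and similarly the pairs $\{D(x), E(x)\}$ and $\{C(x), F(x)\}$ are $2$-adic units iff $v_2(x_1) = 0$ or $v_2(x_2) = 0$, respectively.

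Next, I would split into cases based on the nonempty set $S = \{i : v_2(x_i) = 0\} \subseteq \{0,1,2\}$. For each $i \in S$ there is a representative of $\alpha_1$ whose entries are both units at $x$: for $i = 0$ take $(-M_F, A)$, for $i = 1$ take $(-M_A, D)$, for $i = 2$ take $(-M_D, F)$. Recall $-M_F = B^2 - 4AD$, $-M_A = E^2 - 4DF$, $-M_D = C^2 - 4AF$. The key mod-$8$ identity is that whenever $u x_j^2$ is a $2$-adic unit and $\beta \in \bZ_2$,
\begin{equation*}
(u x_j^2 + 2\beta)^2 \equiv u^2 x_j^4 \equiv 1 \pmod 8,
\end{equation*}
since the cross term $4\beta(u x_j^2 + \beta)$ satisfies $\beta(u x_j^2 + \beta) \equiv 0 \pmod 2$. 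Therefore the squared form $B^2$, $E^2$, or $C^2$ contributes $1 \pmod 8$, while the term $4\cdot(\text{product of two forms})$ contributes $0 \pmod 8$ when one factor is even and $\equiv 4 \pmod 8$ when both are units. In every case $-M_?(x) \equiv 1$ or $-M_?(x) \equiv 5 \pmod 8$, both of which are $\equiv 1 \pmod 4$.

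Since the $2$-adic Hilbert symbol formula $(u_1, u_2)_2 = (-1)^{\epsilon(u_1)\epsilon(u_2)}$ for units (with $\epsilon(u) = (u-1)/2 \pmod 2$) is trivial whenever one argument is $\equiv 1 \pmod 4$, we obtain $(-M_?(x), ?(x))_2 = 0$, so $\inv_2 \alpha_1(x) = 0$. The main obstacle is the uniform mod-$8$ bookkeeping across the seven subsets $S$: in each case one must verify that the chosen representative has nonvanishing entries (which follows from the mod-$2$ reductions) and correctly tally the cross-term contributions. The computations are elementary, but organizing them uniformly so that the same three representatives handle all cases is the heart of the argument.
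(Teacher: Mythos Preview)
Your proposal is correct and follows exactly the paper's approach: normalize so some $x_i$ is a $2$-adic unit, pick the corresponding representative among $(B^2-4AD,A)$, $(E^2-4DF,D)$, $(C^2-4AF,F)$, and use the unit Hilbert-symbol formula. The paper's write-up is slightly leaner than yours: it works mod $4$ rather than mod $8$ (since $4AD\equiv 0\pmod 4$ regardless, one gets $B(x)^2-4A(x)D(x)\equiv 1\pmod 4$ immediately from $B(x)$ being odd), and it does not enumerate the seven subsets $S$ --- choosing any single unit coordinate $x_i$ and its matching representative suffices, so only three (non-exclusive) cases are treated.
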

 
\begin{proof}
Let $x = [x_0, x_1, x_2, w]$ be a point of  $X_1(\bQ_2) \subset \bP(1,1,1,3)(\bQ_2)$. 
By scaling the coordinates, we may assume $x_0, x_1, x_2 \in \bZ_2$ and at least 
one of the $x_i$ is a unit. 
By Lemma~\ref{lemma-equation}, the Hilbert symbols
\begin{equation*}
(B^2-4AD, A), \quad (E^2-4DF, D), \quad (C^2-4AF, F)
\end{equation*}
all represent the image of $\alpha_1$ in $\Br(k(X))$. 
According to whether $x_0,x_1,$ or $x_2$ is a $2$-adic unit, the 
first, second, or third of these representatives can be used to see 
$\inv_2 \, \alpha_1(x) = 0$. 

For instance, suppose $x_0$ is a $2$-adic unit.  
Then by our assumptions on coefficients,
\begin{equation*}
A(x) \quad \text{and} \quad B(x)^2-4A(x)D(x)
\end{equation*} 
are also 2-adic units. 
In particular, they are nonzero, so 
\begin{equation*}
(B(x)^2-4A(x)D(x), A(x))_2
\end{equation*}
represents $\alpha_1(x) \in \Br(\bQ_2)$. 
Recall (see for example \cite[p.\ 20, Theorem 1]{serre}) that if $s,t \in \bZ_2^{\times}$, then 
\begin{equation*}
(s,t)_2 = (-1)^{\frac{s-1}{2}\frac{t-1}{2}}.
\end{equation*}
But by our assumptions 
\begin{equation*}
B(x)^2-4A(x)D(x) \equiv 1 \ (\text{mod}\ 4), 
\end{equation*}
so the formula gives
\begin{equation*}
(B(x)^2-4A(x)D(x), A(x))_2 = 1.
\end{equation*}
Thus $\alpha_1(x) = 0 \in \Br(\bQ_2)$. 

The same argument works when $x_1$ or $x_2$ is a $2$-adic unit, using the other 
representatives for $\alpha_1$ from above. 
\end{proof}


\section{Proof of Theorem~\ref{theorem-counterexample-intro}}
\label{section-proof-counterexample}

Consider the following quadrics in $\bZ[x_0,x_1,x_2]$:
\begin{align*} 
 A  & = -5 x_0^2  + 4 x_0x_2 -4 x_1^2 + 2x_1x_2 -4 x_2^2,
\\ B  & = 5x_0^2 +  2x_0x_1 -2 x_0x_2 + 2x_1^2 + 2x_1x_2 +  4x_2^2,
\\ C  & =  4x_0^2 + 2x_0x_1 -4x_0x_2 +2x_1^2 -2x_1x_2 +  5x_2^2,
\\ D  & =  -4x_0^2  -2x_0x_1  - x_1^2 -2x_1x_2 -4  x_2^2,
\\ E  & =  4x_0^2  +3 x_1^2  +  4x_2^2, 
\\ F  & =  -4x_0^2 + 4x_0x_1 +2x_0x_2 -2 x_1^2 -4x_1x_2 -5 x_2^2.
\end{align*}
Inserting these polynomials in~\eqref{A-F} gives the equation of a (2,2) divisor 
\begin{equation*}
Z \subset \bP(V_1) \times \bP(V_2),
\end{equation*}
which we regard as a variety over $\bQ$. 
As in Section~\ref{subsection-twisted-K3-construction}, $Z$ gives rise to a branched double 
cover $\pi: Y \to \bP(V_1) \times \bP(V_2)$, which is a quadric fibration via projection to each factor. 
Lemma~\ref{lemma-equation} gives explicit equations for the discriminant curves $D_i \subset \bP(V_i)$, 
and the Jacobian criterion can be used to check the $D_i$ are smooth. 
Hence, by Section~\ref{subsection-twisted-K3-construction}, we get associated twisted K3 surfaces $(X_i, \alpha_i)$, 
which we will use to prove Theorem~\ref{theorem-counterexample-intro}.

\begin{remark}
The quadrics $A, \dots, F$ above were found using the algorithm described 
in~\cite[Section 6]{hva}, modified in two ways. First, we omitted the steps related to checking 
the geometric Picard number of $X_1$ is $1$, 
since it was not our goal to produce an example with this property. 
Second, instead of using~\cite[Lemma~4.7]{hva} to constrain the quadrics, 
we used our Lemma~\ref{2-adic}, which results in much smaller coefficients. 
Indeed, the equations for $X_1$ and $X_2$ are: 
\begin{align*}
 & \begin{aligned}  
 w^2 = & -4x_0^6 - 308x_0^5x_1 - 190x_0^4x_1^2 - 278x_0^3x_1^3 - 203x_0^2x_1^4 - 40x_0x_1^5 - 28x_1^6\\&
    + 18x_0^5x_2 + 460x_0^4x_1x_2 + 276x_0^3x_1^2x_2 + 474x_0^2x_1^3x_2 + 40x_0x_1^4x_2 \\&+
    98x_1^5x_2 - 25x_0^4x_2^2 - 820x_0^3x_1x_2^2 - 247x_0^2x_1^2x_2^2 - 374x_0x_1^3x_2^2\\& -
    2x_1^4x_2^2 + 20x_0^3x_2^3 + 652x_0^2x_1x_2^3 + 14x_0x_1^2x_2^3 + 270x_1^3x_2^3 \\&-
    20x_0^2x_2^4 - 562x_0x_1x_2^4 - 105x_1^2x_2^4 - 8x_0x_2^5 + 166x_1x_2^5 - 4x_2^6 ,
    \end{aligned}   \\ 
& \begin{aligned} 
   w^2 = & ~ 236y_0^6 - 740y_0^5y + 1268y_0^4y_1^2 - 1092y_0^3y_1^3 + 624y_0^2y_1^4 - 164y_0y_1^5\\ &+
    32y_1^6 - 616y_0^5y_2+ 416y_0^4y_1y_2 - 96y_0^3y_1^2y_2 - 976y_0^2y_1^3y_2 +
    548y_0y_1^4y_2 \\& - 288y_1^5y_2 + 1236y_0^4y_2^2 - 456y_0^3y_1y_2^2 + 1484y_0^2y_1^2y_2^2-
    356y_0y_1^3y_2^2  \\ & + 676y_1^4y_2^2 - 1332y_0^3y_2^3 - 804y_0^2y_1y_2^3 - 372y_0y_1^2y_2^3
 -1024y_1^3y_2^3 + 1036y_0^2y_2^4     \\& + 768y_0y_1y_2^4 + 812y_1^2y_2^4 - 472y_0y_2^5 -
    388y_1y_2^5 + 40y_2^6.
    \end{aligned} 
\end{align*}
In contrast, most of the coefficients appearing in the corresponding equations in \cite{hva} have 5 or 6 digits.
Smaller coefficients are crucial in making a computer search for points of $X_2$ feasible.
\end{remark}

The following proposition is reduced to a series of computations by the 
results in Section~\ref{subsection-conditions}. 
We postpone its proof to the end of the section. 

\begin{proposition}
\label{proposition-no-point}
$(1)$ $X_1(\bQ_v) \neq \emptyset$ for all places $v$, or equivalently $X_1(\bA_\bQ)\neq\emptyset$.

\medskip \noindent
$(2)$ 
The local invariants of the class $\alpha_1 \in \Br(X_1)$ satisfy
\begin{equation*}
\inv_v \, \alpha_1(x_v) = 
\begin{cases}
0 & \text{if $v$ is finite}, \\
\frac{1}{2} & \text{if $v$ is real},
\end{cases}
\end{equation*}
for all $x_v \in X_1(\bQ_v)$. 
In particular, $\alpha_1$ obstructs the existence of $\bQ$-points on $X_1$, 
and hence gives a Brauer--Manin obstruction to the Hasse principle. 
\end{proposition}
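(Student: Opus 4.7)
My plan is to reduce both parts of the proposition to place-by-place verifications using the lemmas of Section~\ref{subsection-conditions}, together with an explicit search for $v$-adic points guided by the equation $w^2 = -\frac{1}{2}\det(M)$ furnished by Lemma~\ref{lemma-equation}.

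For part $(1)$, the real place is handled by exhibiting a single point: evaluating the sextic at $(x_0:x_1:x_2) = (2:-1:1)$ yields $-\det(M)/2 > 0$, giving a smooth point of $X_1(\bR)$. At each finite place, a smooth $\bF_v$-point lifts via Hensel's lemma to a $\bQ_v$-point. For primes of good reduction, the Lang--Weil estimates guarantee such a point exists once $v$ is sufficiently large, so only finitely many primes need to be checked individually; for primes of bad reduction (determined below), a local point is exhibited explicitly by computer search.

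For part $(2)$, I treat each place separately. At the real place, applying Lemma~\ref{real} requires verifying that $A,D,F$ are negative definite and $B,C,E$ positive definite, which is a computation of principal minors; for instance the Gram matrix of $A$ has leading principal minors $-5$, $20$, $-59$, all of the correct sign. At $v = 2$, applying Lemma~\ref{2-adic} reduces to checking the $2$-adic valuations of the $36$ coefficients: inspection shows precisely $A_1 = -5$, $B_1 = 5$, $C_6 = 5$, $D_4 = -1$, $E_4 = 3$, $F_6 = -5$ are $2$-adic units, and the thirty remaining coefficients are even. At each odd prime of good reduction, Lemma~\ref{goodred} applies directly. At each odd prime $v$ of bad reduction, I apply Proposition~\ref{badred}: this requires verifying that the singular locus of the geometric special fiber of a regular proper model consists of fewer than eight ordinary double points (via the Jacobian criterion on the sextic $w^2 = -\frac{1}{2}\det(M)$), and producing a single $x_v \in X_1(\bQ_v)$ for which one of the Hilbert-symbol representatives in Lemma~\ref{lemma-equation}(2) is trivial, so that $\alpha_1(x_v) = 0$ and hence $\alpha_1$ vanishes at every $\bQ_v$-point. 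Combining these local invariants, the sum $\sum_v \inv_v \alpha_1(x_v) = 1/2 \neq 0$ in $\bQ/\bZ$ for every adelic point, yielding the Brauer--Manin obstruction claimed in part $(2)$.

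The main obstacle is the systematic treatment of odd primes of bad reduction: one must first compute the discriminant of the sextic $\det(M) \in \bZ[x_0,x_1,x_2]$ to enumerate the candidate bad primes, then at each such prime verify the singularity hypothesis of Proposition~\ref{badred} and exhibit a trivializing point. Both tasks are computational in nature and become tractable only because the small coefficients of $A, \dots, F$ --- afforded by the sharpening of the $2$-adic condition in Lemma~\ref{2-adic} --- keep the relevant discriminants and point searches within reach of a computer algebra system.
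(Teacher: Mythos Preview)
Your proposal is correct and follows essentially the same approach as the paper: reduce to place-by-place checks, handle large good-reduction primes by the Weil bounds (the paper makes the threshold $p>22$ explicit), list the bad primes via the discriminant, and at each remaining prime exhibit a local point and verify the hypotheses of Proposition~\ref{badred}, Lemma~\ref{goodred}, Lemma~\ref{real}, and Lemma~\ref{2-adic} exactly as you describe. The only substantive content you omit is the explicit list of bad primes and the table of witnessing $\bQ_p$-points, which the paper supplies as the output of the computations you outline.
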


Using the proposition, we construct the examples which prove Theorem~\ref{theorem-counterexample-intro}. 
The necessary computations that appear below were carried out using Magma\footnote{Code available at \url{http://www.math.brown.edu/\~kenascher/magma/magma.html}.}.

\subsection{Example over $\bQ$}
Using the equation for $X_2$ given by Lemma~\ref{lemma-equation}, 
it can be checked that $x = [1,1,1,0] \in \bP(1,1,1,3)$
is a $\bQ$-point of $X_2$. 
Let $\beta = \alpha_2(x) \in \Br(\bQ)$, 
let $\beta_i$ be the constant class given by the image of 
$\beta$ under $\Br(\bQ) \to \Br(X_i)$, 
and set $\alpha'_i = \beta_i^{-1}\alpha_i$. 

Then there is a $\bQ$-linear equivalence $\Db(X_1, \alpha'_1) \simeq \Db(X_2, \alpha'_2)$ 
induced by the equivalence $\Db(X_1, \alpha_1) \simeq \Db(X_2, \alpha_2)$ of 
Theorem~\ref{theorem-derived-equivalence}.
By Proposition~\ref{proposition-no-point}, $X_1$ has no $\bQ$-point, so a fortiori  
the pair $(X_1, \alpha'_1)$ has no $\bQ$-point. 
On the other hand, by construction $x$ is a $\bQ$-point of $(X_2, \alpha'_2)$. 

\subsection{Example over $\bQ_2$}
Replace the pairs $(X_i, \alpha_i)$ defined above over $\bQ$ by their base changes 
to $\bQ_2$. It can be checked that $x = [-3,-1,1,\sqrt{357008}] \in \bP(1,1,1,3)$ is a $\bQ_2$-point of 
$X_2$ (note that Hensel's lemma can be used to see $357008$ is a $2$-adic square).
One then checks that $\beta = \alpha_2(x) = (B(x)^2-4A(x)D(x),A(x))_{2}$ is nontrivial. 
Let $\beta_i$ be the constant class given by the image of 
$\beta$ under $\Br(\bQ) \to \Br(X_i)$, 
and set $\alpha'_i = \beta_i^{-1}\alpha_i$. 

Then there is a $\bQ_2$-linear equivalence $\Db(X_1, \alpha'_1) \simeq \Db(X_2, \alpha'_2)$ induced 
by the equivalence of Theorem~\ref{theorem-derived-equivalence}.
By Proposition~\ref{proposition-no-point}, $\alpha_1(y)$ is trivial for any $y \in X_1(\bQ_2)$, 
and hence $\alpha'_1(y) = \alpha_2^{-1}(x)\alpha_1(y)$ is nontrivial (since $\alpha_2(x)$ is). 
Thus $(X_1, \alpha'_1)$ has no $\bQ_2$-points. 
On the other hand, by design $x$ is a point of $(X_2, \alpha'_2)$. 

\subsection{Example over $\bR$}
Replace the pairs $(X_i, \alpha_i)$ by their base changes to $\bR$. 
Then Theorem~\ref{theorem-derived-equivalence} still gives an $\bR$-linear equivalence 
$\Db(X_1, \alpha_1) \simeq \Db(X_2, \alpha_2)$. 
Moreover, Proposition~\ref{proposition-no-point} shows $\alpha_1(x)$ is nontrivial 
for any $x \in X_1(\bR)$, so $(X_1, \alpha_1)$ has no $\bR$-points. 
On the other hand, using Lemma~\ref{lemma-equation}, it can be checked 
that the point 
\begin{equation*}
x = [4,3,3, \sqrt{5204}] \in \bP(1,1,1,3)
\end{equation*}
lies on $X_2$ and that 
$\alpha_2(x) = (B(x)^2-4A(x)D(x),A(x))_{\infty}$ is trivial. 
Hence $x$ is an $\bR$-point of $(X_2, \alpha_2)$. 

\subsection{Proof of Proposition~\ref{proposition-no-point}}

\subsubsection{Local points}
We first check that $X_1(\bQ_v)\neq\emptyset$ for all $v$. 
This is obvious when $v=\infty$.
Let $v=p$ be a finite prime of good reduction with $p>22$. 
Then if $(X_1)_p$ is a smooth reduction of $X_1$ at $p$, there is an  
$\bF_p$-point of $(X_1)_p$ by the Weil conjectures. 
This lifts to a $\bQ_p$-point of $X_1$ by Hensel's lemma.

It therefore suffices to check that $X_1(\bQ_p) \neq \emptyset$ for primes $p$ of bad reduction for $X_1$ 
and for all primes $p<22$. 
A Gr\"obner basis calculation as in \cite[Section 5.1]{hva} can be used to show the primes of bad 
reduction for $X_1$ are:
 \begin{center} 
2,
5,
7,
307,
4591,
27077,
371857,
6902849,
104388233,\\
541264119547919951,
6097863609641310921149279, \\
2616678388926286398002864469014842817095009312844790479
\end{center} 
In the table below, we list for each prime $p$ of bad reduction and each $p<22$ 
the $(x_0,x_1,x_2)$ coordinates of a $\bQ_p$-point of $X_1$. 
(By Lemma~\ref{lemma-equation}, $(x_0,x_1,x_2)$ gives a $\bQ_p$-point if 
 $-\frac{1}{2}\det(M)(x_0,x_1,x_2)$ is a square in $\bQ_p$, which can be checked using Hensel's lemma). 

\medskip  
\begin{center}
\resizebox{15cm}{!} {\begin{tabular}{| l | c |  } \hline $p$ & $(x_0,x_1,x_2) $ \\
\hline
2&(-1,0,-1) \\ 
3&(-1,-1,1) \\
5 &(-1,-1,0) \\
7 &(-1,-1,1) \\
11 &(-1,-1,0) \\
13 &(-1,-1,1) \\
17 &(-1,-1,-1) \\
19 &(-1,-1,-1) \\
307 &(-1,-1,-1) \\
4591 &(-1,-1,0) \\
27077 &(-1,-1,-1) \\
371857 &(-1,-1,-1) \\
6902849 &(-1,0,0) \\
104388233 &(-1,-1,-1) \\
541264119547919951 & (-1,-1,1) \\
6097863609641310921149279 & (-1,1,-1) \\
2616678388926286398002864469014842817095009312844790479 & (-1,-1,0) \\
\hline
\end{tabular}}
\end{center}
\smallskip

\subsubsection{Local invariants} 
One computes that for each prime $p \neq 2$ of bad reduction, 
$X_{1, \bQ_p}$ satisfies the assumptions of Proposition~\ref{badred}. 
Moreover, using the representatives for $\alpha_1$ given in Lemma~\ref{lemma-equation}, 
it can be computed that $\alpha_1$ is trivial when evaluated at the $\bQ_p$-points specified in the table above. 
We conclude by Proposition~\ref{badred} that $\inv_v \, \alpha_1(x_v) = 0$ at the non-2-adic finite places 
$v$ of bad reduction. On the other hand, at the non-2-adic finite places of good reduction, we 
also have $\inv_v \, \alpha_1(x_v) = 0$ by Lemma~\ref{goodred}. 

Finally, it is straightforward to check the quadrics $A, \dots, F$ satisfy the hypotheses 
of Lemmas~\ref{real} and~\ref{2-adic}. The conclusions of these lemmas give  
Proposition~\ref{proposition-no-point}(2) at the real and $2$-adic places. \qed


\bibliographystyle{alpha}
\bibliography{derivedk3}

\end{document}